\title{$n$-$\mathscr{X}$-Coherent Rings}
\date{}
\newtheorem{thm}{Theorem}[section]
 \newtheorem{lem}[thm]{Lemma}
 \newtheorem{defn}[thm]{Definition}
 \newtheorem{exs}[thm]{Examples}
\newcommand{\field}[1]{\mathbb{#1}}
\newcommand{\Q }{\field{Q}}
\newcommand{\Z }{\field{Z}}
\def\Card{{\rm Card}}
\def\Ima{{\rm Im}}
\def\Ext{{\rm Ext}}
\def\Tor{{\rm Tor}}
\def\Hom{{\rm Hom}}
\begin{document}
\thispagestyle{empty}

\maketitle \vspace*{-1.5cm}

\begin{center}
{\large\bf Driss Bennis}

\bigskip

Department of Mathematics, Faculty of Science and Technology of
Fez,\\ Box 2202, University S. M. Ben Abdellah Fez,
Morocco, \\[0.2cm]
 driss\_bennis@hotmail.com
\end{center}

\bigskip\bigskip
\noindent{\large\bf Abstract.} This paper unifies several
generalizations of coherent rings in one notion. Namely, we
introduce $n$-$\mathscr{X}$-coherent rings, where $\mathscr{X}$ is
a class of modules and $n$ is a positive integer, as those rings
for which the subclass $\mathscr{X}_n$ of $n$-presented
 modules of $\mathscr{X}$ is not empty, and every module in
$\mathscr{X}_n$ is $n+1$-presented. Then, for each particular
class $\mathscr{X}$ of modules, we find correspondent relative
coherent rings.\\
Our main aim is to show that the  well-known Chase's, Cheatham and
Stone's, Enochs', and Stenstr\"{o}m's characterizations of
coherent rings hold true for any $n$-$\mathscr{X}$-coherent
rings.\bigskip

\small{\noindent{\bf Key Words.}   $n$-$\mathscr{X}$-coherent
rings, $n$-$\mathscr{X}$-flat modules, $n$-$\mathscr{X}$-injective
 modules, Charter modules, Preenvelopes}\medskip

\small{\noindent{\bf 2000 Mathematics Subject Classification.}
16D80, 16E05, 16E30, 16E65, 16P70}
\bigskip\bigskip


\section{Introduction }   Throughout this paper, $R$ denotes
a non-trivial associative ring with identity,  and all $R$-modules
are, if not specified otherwise, left $R$-modules. For an
$R$-module $M$, we use $M^*$   to denote the character module $
\Hom_{\Z}(M, \Q/ \Z)$ of $M$. An $R$-module $M$ is said to be
$n$-presented, for a positive integer $n$, if there is an exact
sequence  of $R$-modules:
 $F_n\rightarrow F_{n-1}\rightarrow \cdots\rightarrow F_0\rightarrow
M\rightarrow 0$    where each $F_i$ is finitely generated and
free. In particular, $0$-presented  and $1$-presented modules are
finitely generated and finitely presented  modules respectively.
An $R$-module $M$ is said to be infinitely presented, if it is
$m$-presented for every positive integer $m$.\bigskip

A ring $R$ is called left coherent, if every finitely generated
left ideal is finitely presented, equivalently every finitely
presented $R$-module is $2$-presented and so infinitely presented.
The coherent rings were first appear in Chase's paper \cite{Chase}
without being mentioned by name. The term coherent was first used
by Bourbaki in \cite{Bou2}. Since then, coherent rings have became
a vigorously active area of research (see Glaz's book \cite{Glaz}
for more details).\bigskip

Several characterizations of coherent rings have been done by
various notions. Here, we are interested in the following
homological ones:
\begin{itemize}
    \item In \cite{Chase}, Chase characterized left
coherent rings as those rings over which every direct product of
flat right modules is flat.
    \item In \cite{Stenst},  Stenstr\"{o}m
proved that a ring $R$ is left coherent if and only if every
direct limit of FP-injective $R$-modules is also FP-injective.
    \item Cheatham and Stone \cite[Theorem 1]{CS} showed that  coherent
rings can be characterized by the use of the notion of character
module as follows:\\
The following assertions are equivalent:
\begin{enumerate}
    \item   $R$ is left coherent;
     \item  An $R$-module $M$ is   injective if and
     only if $M^*$ is flat;
 \item   An $R$-module  $M$ is   injective if and
     only if $M^{**}$ is  injective;
    \item A right $R$-module  $M$ is   flat if and
     only if $M^{**}$ is  flat.
     \end{enumerate}
    \item The notion of flat preenvelopes of modules is used by Enochs to
characterize coherent rings. Recall  that  an $R$-module $F$ in
some class of $R$-modules $\mathscr{X}$ is said to be an
$\mathscr{X}$-preenvelope of an $R$-module $M$, if there is a
homomorphism   $\varphi :\, M\rightarrow F$ such that, for any
homomorphism $\varphi' :\, M\rightarrow F'$ with $F'\in
\mathscr{X}$, there is a homomorphism $f :\, F\rightarrow F'$ such
that $\varphi' =f\varphi$ (see \cite{Rel-hom} for more details
about this notion). The homomorphism $\varphi$ is also called an
$\mathscr{X}$-preenvelope of  $M$. If $\mathscr{X}$ is the class
of flat $R$-modules, an $\mathscr{X}$-preenvelope of $M$ is simply
called a flat preenvelope of $M$. We have \cite[Proposition
6.5.1]{Rel-hom}: a ring $R$ is left coherent if and only if every
right $R$-module has a flat preenvelope.
\end{itemize}

The above characterizations of coherent rings have led to
introduce various  relative coherent rings (see Examples
\ref{ex-1} for some of these  rings). Namely, for each relative
coherent rings, relative flat and injective modules were
introduced and so  used to give characterizations  of their
corespondent  relative coherent rings in the same way as Chase's,
Cheatham and Stone's, Enochs', and Stenstr\"{o}m's
characterizations of coherent rings cited above (see references
for more details). The idea of this paper is to unify these
relative coherent rings in one notion which we call
$n$-$\mathscr{X}$-coherent rings, where $\mathscr{X}$ is a class
of modules and $n$ is a positive integer (see Definition
\ref{def-Coh-ring}). As main results of this paper, we give a
generalization of the above characterizations of coherent rings to
the setting of  $n$-$\mathscr{X}$-coherent rings (see Theorems
\ref{thm-flatr-inj}, \ref{thm-character}, \ref{thm-prenv}). So,
relative flat and injective modules are introduced (see Definition
\ref{def-flat-injective}). Before giving the desired results, we
begin with a characterization of $n$-$\mathscr{X}$-coherent rings
in terms of the functors $\Tor$ and $\Ext$ (see Theorem
\ref{thm-TOr-Ext}).


\section{Main results}

In this paper we are concerned with the following generalization
of the notion of coherent rings.

\begin{defn}\label{def-Coh-ring}  Let
$\mathscr{X}$ be a class of $R$-modules.
\begin{itemize}
    \item $R$ is said to be left $n$-$\mathscr{X}$-coherent, for a positive
integer $n$, if the subclass $\mathscr{X}_n$ of $n$-presented
$R$-modules of $\mathscr{X}$ is not empty, and every $R$-module in
$\mathscr{X}_n$ is $n+1$-presented.
    \item Similarly, the right
$n$-$\mathscr{X}$-coherent rings are defined.
    \item A ring $R$ is called   $n$-$\mathscr{X}$-coherent if it is
both left and  right $n$-$\mathscr{X}$-coherent.
\end{itemize}
\end{defn}

It is trivial to show that over $n$-$\mathscr{X}$-coherent rings
the  $n$-presented modules are in fact infinitely presented.

\begin{exs}\label{ex-1}
\begin{enumerate}
    \item Clearly, for $n=0$ and $\mathscr{C}$ is  the class of all cyclic
$R$-modules, the $1$-$\mathscr{C}$-coherent rings are just the
 Noetherian rings.
    \item  For $n=1$, the $1$-$\mathscr{C}$-coherent rings are
    just the
  coherent rings. Note that, from \cite[Theorem
2.3.2]{Glaz}, the $1$-$\mathscr{C}$-coherence is   the same as the
$1$-$\mathscr{M}$-coherence, where $\mathscr{M}$ denotes the class
of all $R$-modules.
\item  An extension of the notion of coherent rings were
introduced in \cite{Costa} and \cite{DKM} as follows: for any
positive integer $n\geq 1$,  a ring $R$ is called $n$-coherent
(resp., strong $n$-coherent), if it is $n$-$\mathscr{C}$-coherent
(resp., $n$-$\mathscr{M}$-coherent). The  strong $n$-coherent
rings were introduced by Costa \cite{Costa} who first called them
$n$-coherent (see also \cite{CD, DKMS, KM}).
    \item Let $s$ and $t$ be two positive integer and let
$\mathscr{M}_{(s,t)}$ be the class
 of finitely presented $R$-modules  of the form
$R^s/K$, where $K$ is a $t$-generated submodule of  the left
$R$-module $R^s$. The $1$-$\mathscr{M}_{(s,t)}$-coherent rings
were introduced in \cite{CZZ} and they were called
$(s,t)$-coherent rings. In particular, The
$1$-$\mathscr{M}_{(1,1)}$-coherent rings (equivalently, the rings
that satisfy: every principal ideal is finitely presented) were
introduced in \cite{DM3} and they were called P-coherent rings.
    \item Also a left min-coherent ring were introduced in  \cite{Mao3} as a
particular case of $1$-$\mathscr{M}_(s,t)$-coherent rings: a ring
$R$ is said to be   left min-coherent if every simple left ideal
of $R$ is finitely presented. Then min-coherent rings are just the
$1$-$\mathscr{C}_{S}$-coherent, where $\mathscr{C}_{S}$ is the
class of all cyclic $R$-modules of the form $R/I$, where $I$ is a
simple left ideal  of $R$.
    \item In the case where $\mathscr{X}$ is the class of all
submodules of the Jacobson radical,   the
$0$-$\mathscr{X}$-coherent rings are called J-coherent (see
\cite{DLM}).
 \item In \cite{Camillo} (see also \cite{Mao2}),    the class
 $\mathscr{T}$  of all
 torsionless $R$-module is of interest, such that the $0$-$\mathscr{T}$-coherent rings
are called  $\Pi$-coherent ring.
 \item Finally, consider a class $\mathscr{P}_d$ of all modules of
 projective dimension at most a positive integer $d$. The $m$-$\mathscr{P}_d$-coherent rings were introduced in \cite{DM2} and
 they were called $(m,d)$-coherent (this notion differs from the one in 4 above). The $1$-$\mathscr{P}_d$-coherent rings were first introduced  in
 \cite{Lee} and they were called $d$-coherent  (this notion also differs from the one in 3 above). Also, a particular
 case of $1$-$\mathscr{P}_d$-coherent were introduced in
 \cite{LWei}.
\end{enumerate}
\end{exs}

All of the above relative coherent rings have analogous
characterizations of Chase's, Cheatham and Stone's, Enochs', and
Stenstr\"{o}m's characterizations  of coherent rings (see
references). The aim of this paper is to show that all of these
characterizations hold true for $n$-$\mathscr{X}$-coherent rings
without any further condition on the class of modules
$\mathscr{X}$.\bigskip

We begin with the following characterization of
$n$-$\mathscr{X}$-coherent rings in terms of the functors $\Tor$
and $\Ext$

\begin{thm}\label{thm-TOr-Ext} Let
$\mathscr{X}$ be a class of $R$-modules such that, for a positive
integer $n\geq 1$, the subclass $\mathscr{X}_n$ of $n$-presented
$R$-modules of $\mathscr{X}$ is not empty.  Then, the following
assertions are equivalent:
\begin{enumerate}
    \item   $R$ is left $n$-$\mathscr{X}$-coherent;
     \item  For every set $J$, the canonical
homomorphism  $ R^J\otimes_R M\rightarrow M^J $ is bijective for
$M\in \mathscr{X}_n$ , and we have $\Tor^R_i( R^J,M)= 0$
 for every $0<i\leq n$;
 \item For every family $(P_{j})_{j\in J}$ of right $R$-modules, the canonical
homomorphism:  $$ \displaystyle\prod_{j\in J}
\Tor^R_i(P_{j},M)\rightarrow
 \Tor^R_i(\displaystyle\prod_{j\in J} P_{j}, M)$$ is bijective for every $i\leq n$  and every $M\in
 \mathscr{X}_n$;
    \item For every direct system $(N_{j})_{j\in J}$
of  $R$-modules over a directed index set $J$, the canonical
homomorphism:  $$ \underrightarrow{lim}
\Ext^i_R(M,N_{j})\rightarrow
 \Ext^i_R(M,\underrightarrow{lim}N_{j})$$ is bijective for every $i\leq n$ and every $M\in \mathscr{X}_n$.
     \end{enumerate}
\end{thm}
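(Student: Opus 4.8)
The plan is to prove the cycle $(1)\Rightarrow(3)\Rightarrow(2)\Rightarrow(1)$ together with the separate equivalence $(1)\Leftrightarrow(4)$. Throughout I would use that every $M\in\mathscr{X}_n$ is $n$-presented, hence admits a free resolution $\cdots\to P_{n+1}\xrightarrow{\partial}F_n\to\cdots\to F_0\to M\to 0$ in which $F_0,\dots,F_n$ are finitely generated and free (only the tail $P_{n+1}$ need not be finitely generated), and that left $n$-$\mathscr{X}$-coherence amounts to finite presentation of the $n$-th syzygy $L:=\Omega^n M=\Ima(F_n\to F_{n-1})$, presented by $P_{n+1}\xrightarrow{\partial}F_n\to L\to 0$; equivalently $\Ker(F_n\to L)=\Omega^{n+1}M$ is finitely generated. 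For $(1)\Rightarrow(3)$ I would use coherence to take the resolution with $F_0,\dots,F_{n+1}$ finitely generated free. Since tensoring a finitely generated free module with anything commutes with arbitrary products, and since products are exact (so commute with homology), the identifications $\big(\prod_j P_j\big)\otimes F_i\cong\prod_j(P_j\otimes F_i)$ for $i\le n+1$ force the comparison map between $\Tor^R_i(\prod_j P_j,M)$ and $\prod_j\Tor^R_i(P_j,M)$ to be bijective for all $i\le n$, since computing $H_i$ only involves degrees $\le n+1$. The implication $(3)\Rightarrow(2)$ is immediate on specializing $P_j=R$ for all $j\in J$, giving $R^J\otimes M\cong M^J$ and $\Tor^R_i(R^J,M)\cong\prod_J\Tor^R_i(R,M)=0$ for $0<i\le n$.

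The first real obstacle is the converse $(2)\Rightarrow(1)$, where only $\Tor^R_n(R^J,M)=0$ is in fact needed. Applying both $R^J\otimes-$ and $(-)^J$ to the presentation $P_{n+1}\xrightarrow{\partial}F_n\to L\to 0$, I would compare the two resulting right-exact rows through the isomorphism $R^J\otimes F_n\cong (F_n)^J$ (valid as $F_n$ is finitely generated free). The key point is that $\Tor^R_n(R^J,M)=0$ identifies $\Ima(R^J\otimes P_{n+1}\to R^J\otimes F_n)$ with $\Ker(R^J\otimes F_n\to R^J\otimes F_{n-1})$, while exactness of products identifies $\Ima((P_{n+1})^J\to (F_n)^J)$ with $\Ker((F_n)^J\to (F_{n-1})^J)$, and these two kernels correspond under the natural isomorphisms on the finitely generated free terms $F_{n-1},F_n$. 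A diagram chase then shows the canonical map $R^J\otimes L\to L^J$ is injective; being also surjective because $L$ is finitely generated, it is bijective for every $J$, so Chase's finite-presentation criterion gives that $L$ is finitely presented, i.e. $M$ is $(n+1)$-presented.

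For $(1)\Leftrightarrow(4)$ the direction $(1)\Rightarrow(4)$ runs parallel to $(1)\Rightarrow(3)$: with $F_0,\dots,F_{n+1}$ finitely generated free, each $\Hom_R(F_i,-)$ commutes with direct limits over directed index sets, and such limits are exact, hence commute with cohomology, so the comparison map $\underrightarrow{lim}\,\Ext^i_R(M,N_j)\to\Ext^i_R(M,\underrightarrow{lim}\,N_j)$ is bijective for all $i\le n$. The second, and main, obstacle is $(4)\Rightarrow(1)$. Here I would dimension-shift to write $\Ext^n_R(M,N)\cong\mathrm{Coker}\big(\Hom_R(F_{n-1},N)\to\Hom_R(L,N)\big)$ with $L=\Omega^n M$, and form the comparison ladder for a directed system $(N_j)$. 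The column over $\Hom_R(F_{n-1},-)$ is an isomorphism (as $F_{n-1}$ is finitely generated free), and the column over $\Ext^n_R(M,-)$ is an isomorphism by hypothesis; a surjectivity diagram chase then forces the map $\underrightarrow{lim}\,\Hom_R(L,N_j)\to\Hom_R(L,\underrightarrow{lim}\,N_j)$ to be surjective. Since $L$ is finitely generated this comparison map is always injective, so it is bijective for every directed system, whence $L$ is finitely presented and $M$ is $(n+1)$-presented.

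The crux of the whole argument is therefore the passage, in both converses, from commutation or vanishing at the single top degree $i=n$ to finite presentation of the $n$-th syzygy $L=\Omega^n M$. This rests on two classical finiteness criteria—that a finitely generated module $L$ is finitely presented exactly when $R^J\otimes L\to L^J$ is injective for all $J$, and when $\Hom_R(L,-)$ preserves direct limits—combined with the exactness of arbitrary products and of filtered colimits in $R$-modules. The bookkeeping I expect to be most delicate is keeping track of the degree in which the resolution is still finitely generated free (degrees $\le n$, or $\le n+1$ under coherence), so that the comparison maps are genuine isomorphisms precisely up to degree $n$.
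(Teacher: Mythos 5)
Your proof is correct, but it follows a genuinely different route from the paper's. The paper's entire proof is a citation: since left $n$-$\mathscr{X}$-coherence means by definition that every $M\in\mathscr{X}_n$ is $(n+1)$-presented, each of conditions (2), (3), (4) is exactly the corresponding condition of Lemma \ref{lem-TOr-Ext} (the Bourbaki exercise) applied with $n+1$ in place of $n$ and restricted to the modules of $\mathscr{X}_n$, so all equivalences follow wholesale from that lemma with no further argument. You, by contrast, never invoke the criterion at level $n+1$: you re-prove the relevant degree shift by hand, reducing everything to finite presentation of the $n$-th syzygy $L=\Ima(F_n\rightarrow F_{n-1})$, proving the forward implications by the observation that the comparison maps in homological degree $i\leq n$ only see the part of the resolution in degrees $\leq n+1$, where coherence supplies finitely generated free terms, and proving the two converses by four-lemma-type chases that extract, from the single top-degree hypothesis ($\Tor^R_n(R^J,M)=0$, respectively bijectivity of the $\Ext^n$ comparison map), the classical $n=1$ criteria: a finitely generated $L$ is finitely presented if $R^J\otimes_R L\rightarrow L^J$ is injective for every $J$ (Chase), respectively if $\Hom_R(L,-)$ preserves direct limits over directed sets (Lazard). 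In effect you supply a proof of the inductive step of the very result the paper cites. What each approach buys: the paper's proof is maximally short and yields all four equivalences at once, but is opaque without the reference; yours is self-contained modulo the two classical $n=1$ criteria, isolates the minimal hypotheses (only the top degree $i=n$ carries content, the lower degrees being automatic since $M$ is already $n$-presented), and your chase for $(2)\Rightarrow(1)$ is precisely the technique the paper itself deploys later, via the snake lemma, in proving the implication $2\Rightarrow 1$ of Theorem \ref{thm-flatr-inj}.
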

\begin{proof} All equivalences follow from the following result.
\end{proof}

\begin{lem}[\cite{Bou}, Exercice 3,
page187]\label{lem-TOr-Ext}  Let  $M$ be an $R$-module. For a
positive integer $n\geq 1$, the following assertions are
equivalent:
\begin{enumerate}
    \item $M$ is $n$-presented;
     \item  For every set $J$, the canonical
homomorphism  $ R^J\otimes_R M\rightarrow M^J $ is bijective, and
we have $\Tor^R_i( R^J,M)= 0$  for every $0<i< n$;
 \item For every family $(P_{j})_{j\in J}$ of right $R$-modules, the canonical
homomorphism:  $$ \displaystyle\prod_{j\in J}
\Tor^R_i(P_{j},M)\rightarrow
 \Tor^R_i(\displaystyle\prod_{j\in J} P_{j}, M)$$ is bijective for every $i<
 n$;
    \item For every direct system $(N_{j})_{j\in J}$
of  $R$-modules over a directed index set $J$, the canonical
homomorphism:  $$ \underrightarrow{lim}
\Ext^i_R(M,N_{j})\rightarrow
 \Ext^i_R(M,\underrightarrow{lim}N_{j})$$ is bijective for every $i< n$.
     \end{enumerate}
\end{lem}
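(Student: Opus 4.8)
The plan is to prove the chain $(1)\Rightarrow(3)\Rightarrow(2)\Rightarrow(1)$ together with the separate equivalence $(1)\Leftrightarrow(4)$, using induction on $n$ with dimension shifting as the main engine. The base case $n=1$ rests on the classical characterizations of finitely generated and finitely presented modules: a module $M$ is finitely generated if and only if the canonical map $R^J\otimes_R M\to M^J$ is surjective for every set $J$, and finitely presented if and only if this map is bijective for every $J$ (and, dually, $M$ is finitely presented exactly when $\Hom_R(M,-)$ commutes with direct limits). Since for $n=1$ the ranges ``$0<i<n$'' and ``$i<n$'' reduce to the single degree $0$, conditions $(2)$, $(3)$, $(4)$ all collapse to one of these classical statements, so each is equivalent to $M$ being finitely presented, i.e. $1$-presented.

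For the inductive step I would fix a presentation $0\to K\to F\to M\to 0$ with $F$ finitely generated and free, and use that, for $n\geq 2$, $M$ is $n$-presented if and only if $K$ is $(n-1)$-presented. The key computational inputs are the dimension-shift isomorphisms $\Tor^R_i(P,M)\cong\Tor^R_{i-1}(P,K)$ and $\Ext^i_R(M,N)\cong\Ext^{i-1}_R(K,N)$ for $i\geq 2$, together with the fact that $F$ finitely generated free makes $P\otimes_R F\to F^J$ (and the corresponding $\Hom$ and limit maps) an isomorphism while killing the relevant higher $\Tor$/$\Ext$. Tensoring a length-$n$ free resolution of $M$ by $\prod_j P_j$, and using that $\otimes$ with a finitely generated free module commutes with products, that products are exact, and that homology commutes with products, yields $(1)\Rightarrow(3)$ in degrees $i<n$; specializing $P_j=R$ then gives $(1)\Rightarrow(3)\Rightarrow(2)$.

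The substance is $(2)\Rightarrow(1)$, which I would settle by a diagram chase. Writing $\eta_M\colon R^J\otimes_R M\to M^J$ for the canonical map, I compare the right-exact sequence $\Tor^R_1(R^J,M)\hookrightarrow R^J\otimes_R K\to R^J\otimes_R F\to R^J\otimes_R M\to 0$ with the exact sequence $0\to K^J\to F^J\to M^J\to 0$, the middle vertical arrow $\eta_F$ being an isomorphism. A short application of the snake lemma gives $\ker\eta_K\cong\Tor^R_1(R^J,M)$ and $\ker\eta_M\cong\operatorname{coker}\eta_K$; hence ``$\eta_M$ bijective for all $J$ and $\Tor^R_1(R^J,M)=0$'' is exactly ``$\eta_K$ bijective for all $J$'', while the shift identifies vanishing of $\Tor^R_i(R^J,M)$ for $2\le i<n$ with vanishing of $\Tor^R_{i-1}(R^J,K)$ for $0<i-1<n-1$. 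Thus condition $(2)$ for $M$ at level $n$ translates precisely into condition $(2)$ for $K$ at level $n-1$, and the inductive hypothesis gives that $K$ is $(n-1)$-presented, whence $M$ is $n$-presented. The equivalence $(1)\Leftrightarrow(4)$ is the formal dual, replacing products by direct limits and the surjectivity/bijectivity criteria for $R^J\otimes_R M\to M^J$ by those for $\varinjlim\Hom_R(M,N_j)\to\Hom_R(M,\varinjlim N_j)$, and using exactness of direct limits over a directed set in place of exactness of products.

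The main obstacle I expect is the careful low-degree bookkeeping: separating the genuinely new information at $i=0,1$ (which carries the ``finitely presented'' content and must be matched against the base case) from the purely homological shift in degrees $\geq 2$, and making sure the translation between level $n$ for $M$ and level $n-1$ for $K$ is an exact correspondence rather than a one-sided implication. One must also be attentive to the direction of the canonical maps in $(3)$ and $(4)$ — those in $(3)$ being induced by the projections $\prod_j P_j\to P_k$ — and verify that the isomorphism in degree $0$ really reproduces the bijectivity of $\eta_M$, and not merely one of its halves.
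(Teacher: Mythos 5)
Your proposal is correct, but there is nothing in the paper to compare it against: the paper does not prove this lemma at all — it is quoted as an exercise from Bourbaki (Alg\`ebre Homologique, Ch.~10, Exercice 3, p.~187), and the paper's proof of Theorem~\ref{thm-TOr-Ext} consists of the single remark that all equivalences follow from it. Your argument is the standard solution to that exercise, and it is sound: the $n=1$ base case is the classical characterization of finitely presented modules via bijectivity of $R^J\otimes_R M\to M^J$ (resp.\ via $\Hom_R(M,-)$ commuting with direct limits), and the inductive step correctly trades $M$ at level $n$ for a syzygy $K$ at level $n-1$. Two points deserve the care you give them. First, the identification $\ker\eta_K\cong\Tor^R_1(R^J,M)$ is not literally an output of the snake lemma; it needs the four-term exact sequence $0\to\Tor^R_1(R^J,M)\to R^J\otimes_R K\to R^J\otimes_R F\to R^J\otimes_R M\to 0$ together with injectivity of $\eta_F$, after which the snake lemma supplies $\ker\eta_M\cong\operatorname{coker}\eta_K$; your statement of the conclusion is nevertheless exactly right, and only the forward direction (condition $(2)$ for $M$ implies condition $(2)$ for $K$) is needed for the induction. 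Second, in $(4)\Rightarrow(1)$ the chase is not a verbatim dual: the relevant sequence is $0\to\Hom_R(M,N)\to\Hom_R(F,N)\to\Hom_R(K,N)\to\Ext^1_R(M,N)\to 0$, with $\Ext^1$ at the end rather than $\Tor_1$ at the beginning, so one argues by a five-lemma-type chase (using exactness of directed colimits) rather than by the same snake diagram; this is the one place where your sketch is thinner than it should be, though the argument does go through. Finally, note that your technique is precisely the one the paper itself uses at the level of syzygies $K_n,K_{n-1}$ in proving the implication $2\Rightarrow 1$ of Theorem~\ref{thm-flatr-inj}, so your proof is fully consistent with the paper's toolkit and would make the paper self-contained where it currently relies on an external citation.
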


We also use the above result to characterize
$n$-$\mathscr{X}$-coherent rings by relative flatness and
injectivity, which are defined as follows:

\begin{defn}\label{def-flat-injective}  Let
$\mathscr{X}$ be a class of $R$-modules  such that, for a positive
integer $n\geq 1$, the subclass $\mathscr{X}_n$ of $n$-presented
$R$-modules of $\mathscr{X}$ is not empty.
\begin{itemize}
    \item A right $R$-module $M$ is called $n$-$\mathscr{X}$-flat if  $\Tor^R_n(M,N)=
    0$ for every $N\in \mathscr{X}_n$. The  $n$-$\mathscr{X}$-flat left $R$-modules are
    defined similarly.
    \item An $R$-module $M$ is called $n$-$\mathscr{X}$-injective if  $\Ext_R^n(N,M)=
    0$ for every $N\in \mathscr{X}_n$.
\end{itemize}
\end{defn}

 As in Example \ref{ex-1},  we get,  for each special class $\mathscr{X}$ of modules, a correspondent relative flatness
and injectivity. For instance, if $\mathscr{C}$ is the class of
all cyclic $R$-modules, the $1$-$\mathscr{C}$-flat right
$R$-modules are just the classical flat  right $R$-module, and the
$1$-$\mathscr{C}$-injective $R$-modules are just the FP-injective
$R$-modules \cite{Stenst}. If $\mathscr{M}$ is  the class of all
$R$-modules, then,  for a positive integer $n\geq 1$,  the
$n$-$\mathscr{M}$-flat right  $R$-modules were called in \cite{CD}
$n$-flat right $R$-module, and the $n$-$\mathscr{M}$-injective
$R$-modules were called $n$-injective modules.\bigskip

Now we give our first main result, which is a generalization of
Chase's  and Stenstr\"{o}m's characterizations  of coherent rings.

\begin{thm}\label{thm-flatr-inj} Let
$\mathscr{X}$ be a class of $R$-modules such that, for a positive
integer $n\geq 1$, the subclass $\mathscr{X}_n$ of $n$-presented
$R$-modules of $\mathscr{X}$ is not empty.  Then, the following
assertions are equivalent:
\begin{enumerate}
    \item   $R$ is left $n$-$\mathscr{X}$-coherent;
     \item  For every set $J$, the right  $R$-module $ R^J$ is
     $n$-$\mathscr{X}$-flat;
 \item For every family $(P_{j})_{j\in J}$ of  $n$-$\mathscr{X}$-flat right $R$-modules,  the direct product $
  \displaystyle\prod_{j\in J} P_{j} $ is  $n$-$\mathscr{X}$-flat;
    \item For every direct system $(N_{j})_{j\in J}$
of $n$-$\mathscr{X}$-injective   $R$-modules over a directed index
set $J$, the direct limit $\underrightarrow{lim}N_{j}$ is
$n$-$\mathscr{X}$-injective.
     \end{enumerate}
\end{thm}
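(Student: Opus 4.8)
The plan is to deduce the three ``flat'' conditions (1)--(3) from Theorem \ref{thm-TOr-Ext} almost for free, and then to connect the ``injective'' condition (4) by means of character-module duality. The recurring reduction is that each $M\in\mathscr{X}_n$ is already $n$-presented, so by Lemma \ref{lem-TOr-Ext} the canonical map $R^J\otimes_R M\to M^J$ is bijective and $\Tor^R_i(R^J,M)=0$ for $0<i<n$; the only content left in the slot $i=n$ is precisely the $n$-$\mathscr{X}$-flatness (resp. injectivity) appearing in the statement.

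For the flat part I would run the cycle (1)$\Rightarrow$(3)$\Rightarrow$(2)$\Rightarrow$(1). Assuming (1), Theorem \ref{thm-TOr-Ext}(3) makes $\prod_j\Tor^R_n(P_j,M)\to\Tor^R_n(\prod_jP_j,M)$ bijective; if each $P_j$ is $n$-$\mathscr{X}$-flat the source vanishes, giving (3). Since $R$ is projective as a right module it is $n$-$\mathscr{X}$-flat, and $R^J=\prod_{j\in J}R$, so (3) yields (2). Conversely, given (2), the vanishing $\Tor^R_n(R^J,M)=0$ together with the automatic low-degree data from Lemma \ref{lem-TOr-Ext} reconstitutes condition (2) of Theorem \ref{thm-TOr-Ext}, hence (1). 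The implication (1)$\Rightarrow$(4) is equally direct: Theorem \ref{thm-TOr-Ext}(4) makes $\underrightarrow{lim}\,\Ext^i_R(M,N_j)\to\Ext^i_R(M,\underrightarrow{lim}\,N_j)$ bijective for $i\le n$, so if each $N_j$ is $n$-$\mathscr{X}$-injective then $\Ext^n_R(M,\underrightarrow{lim}\,N_j)\cong\underrightarrow{lim}\,0=0$.

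The real work is (4)$\Rightarrow$(1), and here I would pass through character modules. I would first record two dualities: for any right module $P$ and any left module $M$, $\Ext^n_R(M,P^*)\cong\Tor^R_n(P,M)^*$ (from exactness of $(-)^*$ and the adjunction $(P\otimes_R Q)^*\cong\Hom_R(Q,P^*)$ applied to a projective resolution of $M$), so that $P$ is $n$-$\mathscr{X}$-flat iff $P^*$ is $n$-$\mathscr{X}$-injective; and, for $n$-presented $M$, $\Tor^R_n(E^*,M)\cong\Ext^n_R(M,E)^*$, so that the character module of an $n$-$\mathscr{X}$-injective module is $n$-$\mathscr{X}$-flat. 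Given these, it suffices to prove (2), i.e. that $R^J$ is $n$-$\mathscr{X}$-flat. Now $R^*$ is injective, being the character module of the flat right module $R$, hence $n$-$\mathscr{X}$-injective; therefore $\bigoplus_{J}R^*=\underrightarrow{lim}_{F}\bigoplus_{j\in F}R^*$ is a direct limit of finite sums of $n$-$\mathscr{X}$-injectives, and by (4) it is again $n$-$\mathscr{X}$-injective. Its character module $(\bigoplus_J R^*)^*\cong\prod_J R^{**}$ is then $n$-$\mathscr{X}$-flat by the second duality. Finally, the canonical pure monomorphism $R^J\hookrightarrow\prod_J R^{**}$, a product of the evaluation maps $R\to R^{**}$, exhibits $R^J$ as a pure submodule of an $n$-$\mathscr{X}$-flat module; since a pure exact sequence is a direct limit of split ones and $\Tor$ commutes with direct limits, $\Tor^R_n(R^J,M)$ injects into $\Tor^R_n(\prod_J R^{**},M)=0$, so $R^J$ is $n$-$\mathscr{X}$-flat. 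This gives (2), hence (1).

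I expect the bridge (4)$\Rightarrow$(2) to be the crux. Condition (4) controls only direct limits of $n$-$\mathscr{X}$-injective modules, whereas condition (4) of Theorem \ref{thm-TOr-Ext} would require control of arbitrary direct systems, so one cannot simply invoke that theorem. The character-module duality, the identity $(\bigoplus_J R^*)^*\cong\prod_J R^{**}$, and the purity of $R^J\hookrightarrow\prod_J R^{**}$ together with closure of $n$-$\mathscr{X}$-flat modules under pure submodules are exactly what circumvents this gap; verifying the two dual isomorphisms in degree $n$ for $n$-presented $M$ is the main technical point.
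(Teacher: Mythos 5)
Your handling of the flat part is correct and is essentially the paper's own argument: the cycle $(1)\Rightarrow(3)\Rightarrow(2)\Rightarrow(1)$ and the implication $(1)\Rightarrow(4)$ all come down to Theorem \ref{thm-TOr-Ext}, and your remark that $(2)$ together with the automatic low-degree data of Lemma \ref{lem-TOr-Ext} reconstitutes condition $(2)$ of Theorem \ref{thm-TOr-Ext} is exactly what the paper's snake-lemma computation in its proof of $2\Rightarrow 1$ accomplishes by hand.

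The genuine gap is in $(4)\Rightarrow(1)$, at your ``second duality.'' For a merely $n$-presented module $M$ there is no natural isomorphism $\Tor^R_n(E^*,M)\cong(\Ext^n_R(M,E))^*$: one needs a projective resolution of $M$ by finitely generated modules through degree $n+1$. Choosing $P_\bullet\to M$ with $P_0,\dots,P_n$ finitely generated free and $P_{n+1}$ arbitrary, the comparison map $E^*\otimes_R P_\bullet\to\Hom_R(P_\bullet,E)^*$ is an isomorphism only in degrees $\le n$, and on homology in degree $n$ it yields only a natural epimorphism $\Tor^R_n(E^*,M)\twoheadrightarrow(\Ext^n_R(M,E))^*$, whose kernel is invisible to the vanishing of $\Ext^n_R(M,E)$. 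This is precisely why the paper invokes Rotman's Theorem 9.51 only inside Theorem \ref{thm-character}, \emph{after} assertion $(1)$ has been assumed, so that every $M\in\mathscr{X}_n$ is already infinitely presented; your step presupposes $(n+1)$-presentedness, which is the very conclusion being proved. Moreover, the step is not merely unjustified but false as a standalone lemma: for $n=1$ and $\mathscr{X}=\mathscr{M}$ it asserts that every FP-injective left module has a flat character module, and granting that, your own closing moves (each $P_j^*$ injective, $\bigoplus_J P_j^*$ FP-injective, $(\bigoplus_J P_j^*)^*\cong\prod_J P_j^{**}$ flat, and $\prod_J P_j$ pure in $\prod_J P_j^{**}$, hence flat) would show that over \emph{any} ring every product of flat right modules is flat, i.e.\ that every ring is left coherent, by Chase's theorem. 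So over any non-left-coherent ring there is an FP-injective module (indeed a direct sum of injectives) with non-flat character module, and the bridge you build from $(4)$ to $(2)$ collapses.

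Your diagnosis that Theorem \ref{thm-TOr-Ext} cannot be invoked verbatim for $(4)\Rightarrow(1)$ — since $(4)$ only controls direct systems of $n$-$\mathscr{X}$-injective modules — is correct, but the repair has to stay on the $\Ext$/direct-limit side rather than pass through character modules. The paper's route is parallel to its $2\Rightarrow 1$: for $M\in\mathscr{X}_n$ one shows the $n$-th syzygy $K_n$ is finitely presented by verifying the criterion of Lemma \ref{lem-TOr-Ext} $(1\Leftrightarrow 4)$, using dimension shifting $\Ext^1_R(K_{n-1},-)\cong\Ext^n_R(M,-)$, the hypothesis $(4)$ applied to direct systems of $n$-$\mathscr{X}$-injective modules (e.g.\ limits of injectives), and the $n$-presentedness of $M$ to control all degrees $<n$. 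Incidentally, the computation you wrote down is essentially the paper's proof of Theorem \ref{thm-character} $4\Rightarrow 1$ — but there the flatness of the relevant double dual is supplied by the hypothesis, not extracted from an invalid duality.
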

\begin{proof} The implication $1\Rightarrow 3$   follows from Theorem
\ref{thm-TOr-Ext} ($1\Leftrightarrow 3$). The implication $2\Rightarrow 3$ is obvious.\\
We prove the implication
 $2\Rightarrow 1$. Consider an
$R$-module $M\in \mathscr{X}_n$. Then, there is an exact sequence
of $R$-modules: $$ F_{ n }\rightarrow F_{ n-1}\rightarrow \cdots
\rightarrow F_{ 1} \rightarrow F_0\rightarrow M \rightarrow 0$$
such that each $F_i$ is  finitely generated and free. Consider $
K_{ n }= \Ima( F_{ n }\rightarrow F_{ n-1})$ and $ K_{ n-1 }=
\Ima( F_{ n-1 }\rightarrow F_{ n-2})$. Then, we have the following
short exact sequence $$ 0 \rightarrow  K_{ n } \rightarrow F_{
n-1}\rightarrow K_{ n-1}\rightarrow 0 $$ Since
$\Tor^R_1(N,K_{n-1})\cong\Tor^R_n(N,M)
    =0$ for every $N\in \mathscr{X}_n$, we get the following
    commutative diagram with exact rows:
    $$\xymatrix{
    0 \ar[r]   &   R^J\otimes_R  K_{ n } \ar[d]^{\alpha} \ar[r]   &  R^J \otimes_R F_{ n-1 } \ar[d]^{\beta} \ar[r]  & R^J \otimes_R  K_{ n-1 } \ar[d]^{\gamma}  \ar[r]  & 0  \\
    0 \ar[r]   &      (K_{ n })^J  \ar[r]   & (F_{  n-1 } )^J  \ar[r]    &  (K_{  n-1 } )^J  \ar[r]  &
    0}$$
From Lemma \ref{lem-TOr-Ext} ($1\Leftrightarrow 2$), $\beta$ and
$\gamma$ are isomorphisms. Then, using snake lemma
\cite[Proposition 1.2.13]{Rel-hom}, we get that  $\alpha$ is also
an isomorphism. Then, by Lemma \ref{lem-TOr-Ext}
($2\Leftrightarrow 1$), $K_{ n }$ is finitely
presented and therefore $M$ is $n+1$-presented.\\
\indent It remains to prove the equivalence $1\Leftrightarrow 4$.
The implication $1\Rightarrow 4 $ follows from Theorem
\ref{thm-TOr-Ext} ($1\Leftrightarrow 4 $). Using Lemma
\ref{lem-TOr-Ext} ($4\Leftrightarrow 1$), the proof of the
implication   $4\Rightarrow 1$ is similar to the one  of the
implication $2\Rightarrow 1$ above.
\end{proof}

Now we give a counterpart of Cheatham and Stone's characterization
of $n$-$\mathscr{X}$-coherent rings using the notion of character
module. For that we need some results.

\begin{lem} \label{lem-pro-sum} Let
$\mathscr{X}$ be a class of $R$-modules such that, for a positive
integer $n\geq 1$, the subclass $\mathscr{X}_n$ of $n$-presented
$R$-modules of $\mathscr{X}$ is not empty. Then, for a family
$(M_{j})_{j\in J}$ of   $R$-modules, we have:
\begin{enumerate}
    \item $  \displaystyle\bigoplus_{j\in J} M_{j} $
       is $n$-$\mathscr{X}$-flat if and only if each $M_j$ is
       $n$-$\mathscr{X}$-flat.
 \item $  \displaystyle\prod_{j\in J} M_{j} $
       is $n$-$\mathscr{X}$-injective if and only if each $M_j$ is
       $n$-$\mathscr{X}$-injective.
           \end{enumerate}
\end{lem}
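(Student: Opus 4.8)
The plan is to derive both equivalences from two standard additivity properties of derived functors: that $\Tor^R_n(-,N)$ commutes with arbitrary direct sums, and that $\Ext^n_R(N,-)$ commutes with arbitrary direct products. Once these are in hand, each statement reduces to the elementary observation that a direct sum (respectively, a direct product) of abelian groups vanishes if and only if every summand (respectively, factor) vanishes.

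For the first assertion, I would fix an arbitrary $N\in\mathscr{X}_n$ and choose a projective resolution $P_\bullet\rightarrow N$ of the module $N$. Since the tensor product commutes with direct sums, there is a natural isomorphism of complexes $\left(\bigoplus_{j\in J}M_j\right)\otimes_R P_\bullet\cong\bigoplus_{j\in J}(M_j\otimes_R P_\bullet)$, and because direct sums of abelian groups are exact, homology commutes with them; taking homology in degree $n$ yields the natural isomorphism
$$\Tor^R_n\Bigl(\bigoplus_{j\in J}M_j,\,N\Bigr)\cong\bigoplus_{j\in J}\Tor^R_n(M_j,N).$$
The left-hand side vanishes for every $N\in\mathscr{X}_n$ exactly when each summand $\Tor^R_n(M_j,N)$ does, so $\bigoplus_{j\in J}M_j$ is $n$-$\mathscr{X}$-flat if and only if every $M_j$ is.

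The second assertion is proved dually. Fixing $N\in\mathscr{X}_n$ with a projective resolution $P_\bullet\rightarrow N$ and using that $\Hom_R(P,-)$ carries the direct product $\prod_{j\in J}M_j$ to $\prod_{j\in J}\Hom_R(P,M_j)$, together with the exactness of direct products, one obtains the natural isomorphism
$$\Ext^n_R\Bigl(N,\,\prod_{j\in J}M_j\Bigr)\cong\prod_{j\in J}\Ext^n_R(N,M_j),$$
and the conclusion follows as before, since a product vanishes if and only if each factor does.

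I do not anticipate a genuine obstacle here: the entire content is the commutation of $\Tor$ with coproducts and of $\Ext$ with products, which holds over any ring for families indexed by an arbitrary set. The only point requiring a little care is that the isomorphisms hold simultaneously for every $N\in\mathscr{X}_n$, but since they are the canonical maps arising from the universal properties of $\bigoplus$ and $\prod$, their naturality in $N$ is automatic and no uniformity issue arises.
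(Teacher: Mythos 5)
Your proof is correct and is essentially the paper's own argument: both reduce the lemma to the standard isomorphisms $\Tor^R_n\bigl(\bigoplus_j M_j,N\bigr)\cong\bigoplus_j\Tor^R_n(M_j,N)$ and $\Ext^n_R\bigl(N,\prod_j M_j\bigr)\cong\prod_j\Ext^n_R(N,M_j)$, and then note that a sum or product vanishes iff each term does. The only difference is that the paper simply cites these isomorphisms (Rotman, Theorems 8.10 and 7.14), whereas you rederive them from a projective resolution; the substance is the same.
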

\begin{proof} $1.$ Follows from the isomorphism \cite[Theorem 8.10]{Rot}:
 $\Tor^R_n( N,  \displaystyle\bigoplus_{j\in J} M_{j}) \cong   \displaystyle\bigoplus_{j\in J}
Tor^R_n(N,M_{j}) .$\\
$2.$ Follows from the isomorphism \cite[Theorem 7.14]{Rot}:
 $\Ext^n_R( N, \displaystyle\prod_{j\in J} M_{j}) \cong
\displaystyle\prod_{j\in J} \Ext^n_R(N,M_{j}) .$
\end{proof}

We also need the following extension of the well-known Lambek's
result \cite{Lambek}:

\begin{lem} \label{lem-lambek} Let
$\mathscr{X}$ be a class of $R$-modules such that, for a positive
integer $n\geq 1$, the subclass $\mathscr{X}_n$ of $n$-presented
$R$-modules of $\mathscr{X}$ is not empty. Then,  a left (resp.
right) $R$-module $M$ is $n$-$\mathscr{X}$-flat if and only if
$M^{* }$ is an $n$-$\mathscr{X}$-injective right (resp. left)
$R$-module.
\end{lem}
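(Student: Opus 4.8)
The plan is to reduce the statement to the classical duality between $\Tor$ and $\Ext$ furnished by the character module, exactly as in Lambek's original argument. The crucial ingredient is the natural isomorphism
$$\Ext^n_R(N, M^{*}) \cong \left(\Tor^R_n(M, N)\right)^{*}$$
valid for every right $R$-module $M$ and every left $R$-module $N$, where $M^{*}$ is regarded as a left $R$-module. This isomorphism is obtained by deriving the adjunction isomorphism $\Hom_{\Z}(M \otimes_R N, \Q/\Z) \cong \Hom_R(N, \Hom_{\Z}(M, \Q/\Z))$, using that $\Q/\Z$ is an injective $\Z$-module, so that applying $\Hom_{\Z}(-, \Q/\Z)$ to a projective resolution of $N$ commutes with homology. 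It can be cited from Rotman's book in the same way as the isomorphisms used in Lemma \ref{lem-pro-sum}.

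Granting this isomorphism, I would argue as follows. Let $M$ be a right $R$-module and fix $N\in\mathscr{X}_n$. Since $\Q/\Z$ is a cogenerator of the category of abelian groups, an abelian group $A$ vanishes if and only if $A^{*}=\Hom_{\Z}(A,\Q/\Z)$ vanishes. Applying this to $A=\Tor^R_n(M,N)$ and invoking the displayed isomorphism yields the chain of equivalences
$$\Tor^R_n(M,N)=0 \;\Longleftrightarrow\; \left(\Tor^R_n(M,N)\right)^{*}=0 \;\Longleftrightarrow\; \Ext^n_R(N,M^{*})=0.$$
As $N$ ranges over $\mathscr{X}_n$, the left-hand condition says precisely that $M$ is $n$-$\mathscr{X}$-flat, while the right-hand condition says precisely that $M^{*}$ is $n$-$\mathscr{X}$-injective. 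This establishes the equivalence for $M$ a right module, and the case of a left $R$-module $M$ follows by the same computation with the two variables of $\Tor$ interchanged.

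The argument is essentially a formal manipulation once the duality isomorphism is in place, so I do not expect a genuine obstacle. The only point requiring care is the bookkeeping of sides: one must check that for a right module $M$ the character module $M^{*}$ carries its natural left $R$-module structure (via $(rf)(m)=f(mr)$), so that $\Ext^n_R(N,M^{*})$ is formed in the correct category of left modules, matching the class $\mathscr{X}_n$ of left $n$-presented modules occurring in the definitions of $n$-$\mathscr{X}$-flat and $n$-$\mathscr{X}$-injective. With the variances aligned, the two defining conditions correspond term by term under the isomorphism.
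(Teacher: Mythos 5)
Your proposal is correct and follows essentially the same route as the paper: the paper's proof is precisely the citation of the duality isomorphism $(\Tor^R_n(M,N))^* \cong \Ext^n_R(N,M^*)$ from Rotman (page 360), from which the equivalence follows. You merely spell out the two points the paper leaves implicit — the derivation of the isomorphism from the $\otimes$--$\Hom$ adjunction with $\Q/\Z$ injective, and the fact that $\Q/\Z$ is a cogenerator so that $A=0$ iff $A^*=0$, which is exactly what the ``only if'' direction needs.
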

\begin{proof}  Follows from the isomorphism \cite[page 360]{Rot}:
 $(\Tor^R_n( M, N))^* \cong  \Ext^n_R(N,  M ^* ).$ 
\end{proof}

The notion of pure submodules is also used. Recall that a short
exact sequence of    $R$-modules  $0\rightarrow A \rightarrow
B\rightarrow C \rightarrow 0$ is said to be pure if, for every
right $R$-module $M$, the sequence $0\rightarrow M \otimes_R
A\rightarrow M \otimes_R B\rightarrow M \otimes_R C \rightarrow 0$
is exact. In this case, $A$ is called a pure submodule of $B$.

\begin{lem}[\cite{Stenst2}, Exercise  40]   \label{lem-pur-split} Let    $0\rightarrow A \rightarrow B\rightarrow C
\rightarrow 0$  be a short exact sequence of    $R$-modules. Then,
the following assertions are equivalent:
\begin{enumerate}
    \item  The exact sequence  $0\rightarrow A \rightarrow B\rightarrow C
\rightarrow 0$ is pure;
     \item The exact sequence  $0\rightarrow \Hom_R(P,A) \rightarrow \Hom_R(P,B)\rightarrow \Hom_R(P,C)
\rightarrow 0$ is exact for every finitely presented $R$-module
$P$;
    \item The short sequence of right $R$-modules
$0\rightarrow C^* \rightarrow B^*\rightarrow A^* \rightarrow 0$
splits.
     \end{enumerate}
\end{lem}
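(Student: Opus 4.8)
The plan is to prove the two equivalences $(1)\Leftrightarrow(3)$ and $(1)\Leftrightarrow(2)$ separately, using assertion $(1)$ as the hub. I first record the two facts that drive everything. Since $\Q/\Z$ is an injective $\Z$-module, the functor $(-)^*$ is exact and is a cogenerator, so a homomorphism $f$ is injective if and only if $f^*$ is surjective; in particular the character-dual sequence $0\to C^*\to B^*\xrightarrow{i^*} A^*\to 0$ is automatically exact, and the real content of $(3)$ is that it splits. Second, I will use the adjunction isomorphism $(M\otimes_R N)^*\cong \Hom_R(N,M^*)$, natural in the right module $M$ and the left module $N$, together with the canonical evaluation map $\sigma_X\colon X\to X^{**}$ and the triangle identity $(\sigma_X)^*\circ\sigma_{X^*}=\mathrm{id}_{X^*}$.

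To prove $(1)\Rightarrow(3)$ I apply purity to the right module $M=A^*$, obtaining an injection $A^*\otimes_R A\hookrightarrow A^*\otimes_R B$. The evaluation map $A^*\otimes_R A\to \Q/\Z$, $\varphi\otimes a\mapsto\varphi(a)$, extends along this injection because $\Q/\Z$ is injective; transporting the extension through the adjunction yields a homomorphism $h\colon B\to A^{**}$ with $h\circ i=\sigma_A$. Setting $s:=h^*\circ\sigma_{A^*}\colon A^*\to B^*$ and computing $i^*\circ s=(h\circ i)^*\circ\sigma_{A^*}=(\sigma_A)^*\circ\sigma_{A^*}=\mathrm{id}_{A^*}$ gives a splitting of $i^*$, which is $(3)$. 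For $(3)\Rightarrow(1)$ I dualize once more: the split sequence $(3)$ dualizes to a split exact sequence $0\to A^{**}\to B^{**}\to C^{**}\to 0$, into which the naturality squares of $\sigma$ embed the original sequence. Tensoring with an arbitrary right module $M$ keeps the bottom row injective (it is split) and keeps each vertical map $\mathrm{id}_M\otimes_R\sigma_A$ injective, because $\sigma_X$ is a pure monomorphism (itself a consequence of the same triangle identity); a one-square diagram chase then forces $\mathrm{id}_M\otimes_R i$ to be injective, i.e.\ purity.

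The equivalence $(1)\Leftrightarrow(2)$ is the reformulation of purity through finitely presented modules. For $(1)\Rightarrow(2)$, given a finitely presented $P$ with presentation $R^m\xrightarrow{\alpha}R^n\to P\to 0$ and a homomorphism $f\colon P\to C$, I first lift $f$ along the free module $R^n$ to some $\tilde g\colon R^n\to B$; then $\tilde g\alpha$ factors through $A=\Ker p$, producing $\beta\colon R^m\to A$. Purity, in the guise of solvability of a finite system of $R$-linear equations over $A$ (equivalently, injectivity of $T\otimes_R i$ for the transpose module $T$ of the presentation), lets me factor $\beta$ through $\alpha$ by a map into $A$; subtracting this correction makes $\tilde g$ descend to the desired lift $P\to B$. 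For $(2)\Rightarrow(1)$ I reduce to finitely presented right modules $M$ (every module is a direct limit of such, and direct limits are exact and commute with $\otimes_R$), and then translate the statement that $\mathrm{id}_M\otimes_R i$ is injective into exactly the lifting property $(2)$ for the finitely presented left module obtained as the transpose of a presentation of $M$.

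The elegant part is $(1)\Leftrightarrow(3)$, where the character-module adjunction and the triangle identity do all of the work with no bookkeeping. The main obstacle is the passage between the tensor-theoretic condition $(1)$ and the $\Hom$-theoretic condition $(2)$: purity is tested by tensoring with \emph{right} modules, whereas $(2)$ lifts maps out of finitely presented \emph{left} modules, so the two sides must be bridged by the Auslander--Bridger transpose and Cohn's description of purity through solvability of finite linear systems. Keeping the presentation matrices and the left/right sides straight in that translation is where the real care is needed; once it is in place, both implications of $(1)\Leftrightarrow(2)$ reduce to routine diagram chases.
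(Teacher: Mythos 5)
The paper never proves this lemma: it is quoted, with attribution, as Exercise 40 of Stenstr\"{o}m's \emph{Rings of Quotients}, so there is no in-paper argument to compare yours against, and your proof must stand on its own --- which it does. The half $(1)\Leftrightarrow(3)$ is complete and correct as written: extending the evaluation map $A^*\otimes_R A\to \Q/\Z$ along the pure injection $A^*\otimes_R A\to A^*\otimes_R B$ (injectivity of $\Q/\Z$), converting it via the adjunction $(A^*\otimes_R B)^*\cong \Hom_R(B,A^{**})$ into $h\colon B\to A^{**}$ with $h\circ i=\sigma_A$, and then invoking the triangle identity $(\sigma_A)^*\circ\sigma_{A^*}=\mathrm{id}_{A^*}$ does produce a section of $i^*$, and the converse via the split sequence $0\to A^{**}\to B^{**}\to C^{**}\to 0$ and purity of $\sigma_A$ is sound. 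Note that this purity claim is exactly the paper's Lemma \ref{lem-pur-mono} (Stenstr\"{o}m's Exercise 41, likewise quoted without proof), so you have reproved that lemma in passing; citing it would shorten $(3)\Rightarrow(1)$. The half $(1)\Leftrightarrow(2)$ is correct in structure but is the compressed part: both implications are delegated to the equational (Cohn-style) reading of purity via the transpose of a presentation matrix --- your lifting scheme (lift $f\colon P\to C$ through the free cover $R^n\to P$, push the relations into $A$, correct by a solution in $A$ of a finite linear system already solvable in $B$) is the standard argument and works, as does the reduction of $(2)\Rightarrow(1)$ to finitely presented right modules by direct limits. A fully rigorous write-up would still have to spell out the matrix translation, namely that for the finitely presented right module $T$ presented by the transposed matrix one has $T\otimes_R A\cong A^m/\{(\textstyle\sum_j m_{kj}x_j)_k : x_j\in A\}$, so that vanishing in $T\otimes_R B$ of the class of $(\beta(e_k))_k$ is precisely solvability in $B$ and injectivity of $T\otimes_R i$ hands back solvability in $A$; but this is bookkeeping, and no step of your outline fails.
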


\begin{lem}[\cite{Stenst2}, Exercise 41]   \label{lem-pur-mono}  Every $R$-module $M$ is a pure submodule of $M^{**}$ via the
the canonical monomorphism $M\rightarrow M^{**}$.
\end{lem}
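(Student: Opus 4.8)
The plan is to combine the fact that $\Q/\Z$ is an injective cogenerator of the category of abelian groups with the purity criterion of Lemma \ref{lem-pur-split}. Write $\sigma_M : M \rightarrow M^{**}$ for the canonical map, given by $\sigma_M(m)(\phi) = \phi(m)$ for $m \in M$ and $\phi \in M^*$.

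First I would check that $\sigma_M$ is a monomorphism. Given $0 \neq m \in M$, the divisibility of $\Q/\Z$ provides a nonzero homomorphism from the cyclic subgroup $\langle m\rangle$ to $\Q/\Z$, which extends to some $\phi \in M^*$ by the injectivity of $\Q/\Z$ over $\Z$; then $\sigma_M(m)(\phi) = \phi(m) \neq 0$, so $\sigma_M(m) \neq 0$. This gives a short exact sequence $0 \rightarrow M \xrightarrow{\sigma_M} M^{**} \rightarrow C \rightarrow 0$ with $C$ the cokernel.

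The core of the argument is to show that this sequence is pure. By Lemma \ref{lem-pur-split} ($3 \Rightarrow 1$), it suffices to show that the dualized sequence splits. Since $(-)^* = \Hom_\Z(-,\Q/\Z)$ is exact ($\Q/\Z$ being injective over $\Z$), applying it yields the short exact sequence $0 \rightarrow C^* \rightarrow M^{***} \xrightarrow{(\sigma_M)^*} M^* \rightarrow 0$ of right $R$-modules. I would then exhibit an explicit splitting of $(\sigma_M)^*$, namely the canonical map $\sigma_{M^*} : M^* \rightarrow M^{***}$ attached to the module $M^*$. Unwinding the two evaluation maps gives the triangle identity $(\sigma_M)^* \circ \sigma_{M^*} = \mathrm{id}_{M^*}$: for $\phi \in M^*$ and $m \in M$ one computes $\big((\sigma_M)^*(\sigma_{M^*}(\phi))\big)(m) = \sigma_{M^*}(\phi)(\sigma_M(m)) = \sigma_M(m)(\phi) = \phi(m)$. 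Hence $(\sigma_M)^*$ is a split epimorphism, the dual sequence splits, and Lemma \ref{lem-pur-split} ($3 \Rightarrow 1$) then shows the original sequence is pure, i.e. $M$ is a pure submodule of $M^{**}$.

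The only delicate point is the verification of the triangle identity, which is a routine but easy-to-mishandle unwinding of the two canonical evaluation maps; everything else reduces to the injective-cogenerator property of $\Q/\Z$ and the already-established characterization of purity in Lemma \ref{lem-pur-split}.
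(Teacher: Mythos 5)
Your proof is correct. The paper itself gives no argument for this lemma --- it is quoted from Stenstr\"{o}m's book (Exercise 41) with no proof --- and what you wrote is precisely the standard argument behind that exercise: injectivity of $\sigma_M$ from the cogenerator property of $\Q/\Z$, exactness of $(-)^*$, the triangle identity $(\sigma_M)^*\circ\sigma_{M^*}=\mathrm{id}_{M^*}$ splitting the dualized sequence, and then Lemma \ref{lem-pur-split} ($3\Rightarrow 1$). Your unwinding of the triangle identity is accurate; the only point left tacit, namely that $\sigma_{M^*}$ is a homomorphism of right $R$-modules so that the section is a genuine module-theoretic splitting, is a routine verification of the same kind as the $R$-linearity of $\sigma_M$.
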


\begin{lem}[\cite{CS}, Lemma 1]   \label{lem-pur-sum-prod}  For every family $(P_{j})_{j\in J}$ of
 left or right $R$-modules, we have:
\begin{enumerate}
    \item The sum $\displaystyle\bigoplus_{j\in J} P_{j}
 $ is a pure submodule of the product $\displaystyle\prod_{j\in J} P_{j}$
    \item  If each $P_i$ is a pure submodule of an $R$-module $Q_i$,
    then $\displaystyle\prod_{j\in J} P_{j}
 $ is a pure submodule of $\displaystyle\prod_{j\in J} Q_{j} $.
\end{enumerate}
\end{lem}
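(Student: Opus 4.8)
The final statement to prove is Lemma \ref{lem-pur-sum-prod}, which asserts two facts about purity: (1) the direct sum $\bigoplus_{j\in J} P_j$ is a pure submodule of the direct product $\prod_{j\in J} P_j$, and (2) purity is preserved under products, in the sense that if each $P_i$ is pure in $Q_i$ then $\prod_{j\in J} P_j$ is pure in $\prod_{j\in J} Q_j$. This is cited as \cite{CS}, Lemma 1, so the expectation is a short self-contained argument using the purity characterizations already available, chiefly Lemma \ref{lem-pur-split}.

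\textbf{Plan for part (1).} I would use the equivalence (1)$\Leftrightarrow$(2) of Lemma \ref{lem-pur-split}: it suffices to show that for every finitely presented $R$-module $P$, the induced map $\Hom_R(P, \bigoplus_j P_j) \to \Hom_R(P, \prod_j P_j)$ has the property that the sequence with cokernel $\Hom_R(P, (\prod_j P_j)/(\bigoplus_j P_j))$ is exact, i.e. every homomorphism from $P$ into the quotient lifts to $\prod_j P_j$. The key point is that $\Hom_R(P,-)$ commutes with direct products always, and when $P$ is finitely presented it also commutes with direct sums in the relevant sense. Concretely, a map $P \to \prod_j P_j$ is a family of maps $f_j : P \to P_j$; because $P$ is finitely generated, one checks that any map into the quotient can be represented by such a family landing in the sum after adjustment. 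I would spell this out by noting $\Hom_R(P, \bigoplus_j P_j) = \bigoplus_j \Hom_R(P, P_j)$ for finitely presented (indeed finitely generated) $P$, and then verifying surjectivity of $\Hom_R(P,\prod_j P_j)\to \Hom_R(P, (\prod_j P_j)/(\bigoplus_j P_j))$ directly.

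\textbf{Plan for part (2).} Assuming each inclusion $P_i \hookrightarrow Q_i$ is pure, I would again invoke criterion (2) of Lemma \ref{lem-pur-split}. Fix a finitely presented $R$-module $P$ and a homomorphism $g : P \to \prod_i Q_i$ whose composite with the projection to the quotient $\prod_i Q_i / \prod_i P_i \cong \prod_i (Q_i/P_i)$ lands appropriately; equivalently I must show $\Hom_R(P, \prod_i P_i) \to \Hom_R(P, \prod_i Q_i)$ has cokernel computed by the quotient. Since $\Hom_R(P,-)$ commutes with products, this reduces coordinatewise: the map is the product over $i$ of $\Hom_R(P,P_i)\to \Hom_R(P,Q_i)$, and purity of each $P_i$ in $Q_i$ gives, by criterion (2) applied to each factor, that each coordinate sequence $0 \to \Hom_R(P,P_i) \to \Hom_R(P,Q_i) \to \Hom_R(P,Q_i/P_i) \to 0$ is exact. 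A product of exact sequences is exact, so the product sequence is exact, which is exactly criterion (2) for the inclusion $\prod_i P_i \hookrightarrow \prod_i Q_i$.

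\textbf{Main obstacle.} The only genuinely delicate point is in part (1): establishing $\Hom_R(P, \bigoplus_j P_j) \cong \bigoplus_j \Hom_R(P, P_j)$ for finitely generated $P$ and, more importantly, verifying the lifting/surjectivity condition through the quotient. The finite generation of $P$ is essential here, since the isomorphism fails for general $P$; the image of a finite generating set lands in a finite subsum, which is what allows homomorphisms into the quotient to be corrected by elements of $\prod_j P_j$. Once that combinatorial observation is in place, both parts follow cleanly from the $\Hom$-based purity criterion, and part (2) is purely formal given that $\Hom_R(P,-)$ preserves products and that products preserve exactness.
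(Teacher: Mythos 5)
The paper never proves this lemma: it is imported verbatim from Cheatham and Stone \cite{CS}, so your proposal can only be measured against the standard argument, not against anything in the text. Your part (2) is correct and complete as written: $\Hom_R(P,-)$ preserves arbitrary direct products, $\prod_j Q_j/\prod_j P_j\cong\prod_j(Q_j/P_j)$, a direct product of short exact sequences of abelian groups is exact, and Lemma \ref{lem-pur-split} $(1)\Leftrightarrow(2)$ is applied once in each direction. Nothing is missing there, and no finiteness beyond the finite presentation of the test module is needed.

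Part (1) is where all the content sits, and there your sketch has a real gap. The isomorphism $\Hom_R(P,\bigoplus_j P_j)\cong\bigoplus_j\Hom_R(P,P_j)$ (valid for finitely generated $P$) does not by itself yield surjectivity of $\Hom_R(P,\prod_j P_j)\to\Hom_R\bigl(P,(\prod_j P_j)/(\bigoplus_j P_j)\bigr)$: the finite-support observation concerns maps \emph{into the sum}, whereas the map to be lifted goes \emph{into the quotient}, where finite support says nothing; the phrase ``one checks \dots after adjustment'' is exactly the step that requires proof, and the engine of that step is finite presentation, not finite generation. The correct completion runs as follows. Write $A=\bigoplus_j P_j$, $B=\prod_j P_j$, $\pi:B\to B/A$, fix a finite presentation $R^m\xrightarrow{\phi}R^n\xrightarrow{p}P\to 0$ and a homomorphism $f:P\to B/A$. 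Lift $fp$ to $g:R^n\to B$ using freeness of $R^n$. Since $\pi g\phi=fp\phi=0$, the finitely generated submodule $g\phi(R^m)$ of $B$ lies in $A$, hence in $\bigoplus_{j\in F}P_j$ for some \emph{finite} $F\subseteq J$. Let $g'$ be $g$ followed by the endomorphism of $B$ that annihilates the coordinates in $F$. Then $g'\phi=0$, so $g'=hp$ for some $h:P\to B$; and $(g-g')(R^n)\subseteq\prod_{j\in F}P_j=\bigoplus_{j\in F}P_j\subseteq A$ gives $\pi hp=\pi g'=\pi g=fp$, whence $\pi h=f$ because $p$ is surjective. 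Incidentally, there is a shorter route to (1), essentially that of \cite{CS}, which avoids test modules entirely: for \emph{any} right $R$-module $M$, the composite of $M\otimes_R A\to M\otimes_R B$ with the canonical map $M\otimes_R B\to\prod_j(M\otimes_R P_j)$ is, under the identification $M\otimes_R A\cong\bigoplus_j(M\otimes_R P_j)$, the canonical injection of the sum into the product; hence $M\otimes_R A\to M\otimes_R B$ is injective, which is purity directly from the paper's definition.
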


The following result is well-known for the classical flat case
(see, for instance, \cite[Theorem 11.1 $(a  \Leftrightarrow
 c)$]{Stenst2}).

\begin{lem}  \label{lem-pur-fla-inj}  Let
$\mathscr{X}$ be a class of $R$-modules such that, for a positive
integer $n\geq 1$, the subclass $\mathscr{X}_n$ of $n$-presented
$R$-modules of $\mathscr{X}$ is not empty.
\begin{enumerate}
    \item  Every pure submodule of an $n$-$\mathscr{X}$-flat
$R$-module is $n$-$\mathscr{X}$-flat.
    \item Every pure submodule of an $n$-$\mathscr{X}$-injective
$R$-module is $n$-$\mathscr{X}$-injective.
\end{enumerate}
\end{lem}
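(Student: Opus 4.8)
The plan is to treat the two statements separately, routing the flat case through character modules and the injective case through a dimension shift, since purity supplies exactly the right exactness in each setting.

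For (1), let $0\to A\to B\to C\to 0$ be a pure exact sequence with $B$ an $n$-$\mathscr{X}$-flat $R$-module. First I would invoke Lemma \ref{lem-lambek} to convert the flatness of $B$ into the $n$-$\mathscr{X}$-injectivity of the character module $B^{*}$. Because the sequence is pure, Lemma \ref{lem-pur-split} ($1\Leftrightarrow 3$) shows that the dualized sequence $0\to C^{*}\to B^{*}\to A^{*}\to 0$ splits, so that $B^{*}\cong A^{*}\oplus C^{*}$ and $A^{*}$ is a direct summand of $B^{*}$. By Lemma \ref{lem-pro-sum}(2) every direct summand of the $n$-$\mathscr{X}$-injective module $B^{*}$ is again $n$-$\mathscr{X}$-injective, whence $A^{*}$ is $n$-$\mathscr{X}$-injective. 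Applying Lemma \ref{lem-lambek} once more then returns the conclusion that $A$ is $n$-$\mathscr{X}$-flat.

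For (2), let $0\to A\to B\to C\to 0$ be pure with $B$ an $n$-$\mathscr{X}$-injective $R$-module, and fix $N\in\mathscr{X}_{n}$; the goal is $\Ext^{n}_{R}(N,A)=0$. The idea is to replace $N$ by a finitely presented module carrying the same $\Ext^{n}$. Since $N$ is $n$-presented, I would take a free resolution $F_{n}\to\cdots\to F_{0}\to N\to 0$ with each $F_{i}$ finitely generated free and set $P=N$ if $n=1$ and $P=\Ima(F_{n-1}\to F_{n-2})$ if $n\geq 2$. The sequence $0\to\Ker(F_{n-1}\to F_{n-2})\to F_{n-1}\to P\to 0$, together with the fact that $\Ker(F_{n-1}\to F_{n-2})=\Ima(F_{n}\to F_{n-1})$ is finitely generated, exhibits $P$ as finitely presented; and repeated dimension shifting along the syzygy sequences $0\to K_{i}\to F_{i}\to K_{i-1}\to 0$, whose free middle terms have vanishing higher $\Ext$, yields a natural isomorphism $\Ext^{n}_{R}(N,-)\cong\Ext^{1}_{R}(P,-)$. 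Because $P$ is finitely presented, Lemma \ref{lem-pur-split} ($1\Leftrightarrow 2$) makes $\Hom_{R}(P,B)\to\Hom_{R}(P,C)$ surjective, so in the long exact $\Ext_{R}(P,-)$-sequence attached to $0\to A\to B\to C\to 0$ the connecting map $\Hom_{R}(P,C)\to\Ext^{1}_{R}(P,A)$ vanishes; hence $\Ext^{1}_{R}(P,A)\hookrightarrow\Ext^{1}_{R}(P,B)$. Since $B$ is $n$-$\mathscr{X}$-injective, $\Ext^{1}_{R}(P,B)\cong\Ext^{n}_{R}(N,B)=0$, and therefore $\Ext^{1}_{R}(P,A)\cong\Ext^{n}_{R}(N,A)=0$, as required.

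The main obstacle is the asymmetry between the two parts, and the proposal is designed around it. In (2) purity directly delivers what is needed: exactness of $\Hom_{R}(P,-)$ for finitely presented $P$ kills the connecting homomorphism and produces the injection $\Ext^{1}_{R}(P,A)\hookrightarrow\Ext^{1}_{R}(P,B)$, so vanishing transfers from $B$ to $A$. In (1), by contrast, the naive attempt via the long exact $\Tor$-sequence fails, since purity only makes $\Tor^{R}_{1}(B,P)\to\Tor^{R}_{1}(C,P)$ surjective and gives no control over $\Tor^{R}_{1}(A,P)\to\Tor^{R}_{1}(B,P)$; one cannot infer $\Tor^{R}_{1}(A,P)=0$ from $\Tor^{R}_{1}(B,P)=0$ that way. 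This is precisely why I would pass the flat case through character modules, where purity becomes an honest splitting (Lemma \ref{lem-pur-split}(3)) and flatness and injectivity are interchanged by Lemma \ref{lem-lambek}. The only minor point to verify is the base case $n=1$ of the dimension shift in (2), which is immediate because $1$-presented means finitely presented.
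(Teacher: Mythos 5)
Your proof is correct and takes essentially the same route as the paper's: part (1) is verbatim the paper's argument (Lemma \ref{lem-lambek}, the splitting of the dualized sequence via Lemma \ref{lem-pur-split} $(1\Leftrightarrow 3)$, and Lemma \ref{lem-pro-sum}(2)), and part (2) is the paper's dimension-shifting argument to a finitely presented syzygy combined with purity's $\Hom$-surjectivity (Lemma \ref{lem-pur-split} $(1\Leftrightarrow 2)$), merely packaged as ``the connecting map vanishes'' instead of the paper's two exact sequences $(\alpha)$ and $(\beta)$. Your explicit handling of the $n=1$ case (taking $P=N$) is a small point the paper leaves implicit, and your side remark about why the naive $\Tor$ long-exact-sequence attack on (1) stalls is accurate given only the definition of purity.
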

\begin{proof} $1 $. Let $A$ be a pure submodule of an
$n$-$\mathscr{X}$-flat $R$-module $B$. Then, by Lemma
\ref{lem-pur-split}  $(1 \Leftrightarrow 3)$, the sequence
$0\rightarrow (B/A)^* \rightarrow B^*\rightarrow A^* \rightarrow
0$ splits. Then, by Lemma \ref{lem-pro-sum}(2) and being a direct
summand of the $n$-$\mathscr{X}$-injective right $R$-module $B^*$
(Lemma \ref{lem-lambek}), the right $R$-module $A^*$ is
$n$-$\mathscr{X}$-injective. Therefore, by Lemma \ref{lem-lambek},
$A$ is $n$-$\mathscr{X}$-flat.\\
$2$.  Let $A$ be a pure submodule of an
$n$-$\mathscr{X}$-injective $R$-module $B$.  Consider an
$R$-module $M\in \mathscr{X}_n$. Then, there is an exact sequence
of $R$-modules: $$ F_{ n }\rightarrow F_{ n-1}\rightarrow \cdots
\rightarrow F_{ 1} \rightarrow F_0\rightarrow M \rightarrow 0$$
such that each $F_i$ is  finitely generated and free. Consider $
K_{ n }= \Ima( F_{ n }\rightarrow F_{ n-1})$ and $ K_{ n-1 }=
\Ima( F_{ n-1 }\rightarrow F_{ n-2})$. Then, we have the short
exact sequence $$ 0 \rightarrow  K_{ n } \rightarrow F_{
n-1}\rightarrow K_{ n-1}\rightarrow 0 $$ Since
$\Ext^1_R(K_{n-1},A)\cong Ext^n_R(M,A)$, we have only to prove
that $\Ext^1_R(K_{n-1},A)=0$. Applying the functor  $\Hom_R(K_{
n-1},-)$ to the short exact sequence $0\rightarrow A \rightarrow
B\rightarrow B/A \rightarrow 0$, we get the following  exact
sequence:
$$  \Hom_R(K_{
n-1},B)\rightarrow \Hom_R(K_{ n-1},B/A) \rightarrow
\Ext^1_R(K_{n-1},A)  \rightarrow \Ext^1_R(K_{n-1},B)$$  Since $A$
is  $n$-$\mathscr{X}$-injective,  $\Ext^1_R(K_{n-1},B)\cong
\Ext^n_R(M,A)=0$. Thus, the exact sequence above becomes
$$ (\alpha)\qquad \Hom_R(K_{ n-1},B)\rightarrow \Hom_R(K_{
n-1},B/A) \rightarrow \Ext^1_R(K_{n-1},A)  \rightarrow 0$$ On the
other hand,  since $M$ is $n$-presented, $K_{ n-1}$ is finitely
presented, and so, by Lemma \ref{lem-pur-split} $(1
\Leftrightarrow 2)$, we have the following exact sequence:
$$ (\beta)\qquad \Hom_R(K_{ n-1},B)\rightarrow \Hom_R(K_{
n-1},B/A) \rightarrow 0$$  Therefore, by the sequences $(\alpha) $
and $(\beta)$ above, we get $\Ext^1_R(K_{n-1},A) =0$.
\end{proof}

Now we are ready to prove our second main result.

\begin{thm}\label{thm-character} Let
$\mathscr{X}$ be a class of $R$-modules such that, for a positive
integer $n\geq 1$, the subclass $\mathscr{X}_n$ of $n$-presented
$R$-modules of $\mathscr{X}$ is not empty.  Then, the following
assertions are equivalent:
\begin{enumerate}
    \item   $R$ is left $n$-$\mathscr{X}$-coherent;
     \item  An $R$-module  $M$ is  $n$-$\mathscr{X}$-injective if and
     only if $M^*$ is $n$-$\mathscr{X}$-flat;
 \item   An $R$-module  $M$ is  $n$-$\mathscr{X}$-injective if and
     only if $M^{**}$ is $n$-$\mathscr{X}$-injective;
    \item A right $R$-module  $M$ is  $n$-$\mathscr{X}$-flat if and
     only if $M^{**}$ is $n$-$\mathscr{X}$-flat.
     \end{enumerate}
\end{thm}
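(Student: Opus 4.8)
The strategy is to isolate the one genuinely arithmetic step and obtain everything else formally from Lambek's Lemma \ref{lem-lambek} and the purity results. Two observations organize the proof. First, Lemma \ref{lem-lambek} applied to the right module $M^*$ says that ``$M^*$ is $n$-$\mathscr{X}$-flat'' means exactly ``$M^{**}$ is $n$-$\mathscr{X}$-injective''; hence the biconditional in $(2)$ is literally the biconditional in $(3)$, so $2\Leftrightarrow 3$ requires no hypothesis on $R$. Second, since $M$ is a pure submodule of $M^{**}$ (Lemma \ref{lem-pur-mono}) and purity transports $n$-$\mathscr{X}$-flatness and $n$-$\mathscr{X}$-injectivity to submodules (Lemma \ref{lem-pur-fla-inj}), the arrows ``$M^{**}$ injective $\Rightarrow M$ injective'' and ``$M^{**}$ flat $\Rightarrow M$ flat'' hold over any ring; thus in $(3)$ and $(4)$ only the forward arrows carry content. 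I would therefore run the cycle $1\Rightarrow 2\Rightarrow 4\Rightarrow 1$ together with the formal equivalence $2\Leftrightarrow 3$.

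The implication $1\Rightarrow 2$ is where coherence enters, and its forward half is the main obstacle. Assume $R$ is left $n$-$\mathscr{X}$-coherent and let $M$ be $n$-$\mathscr{X}$-injective; I want $\Tor^R_n(M^*,N)=0$ for every $N\in\mathscr{X}_n$. The key is the natural character isomorphism $\Tor^R_n(M^*,N)\cong(\Ext^n_R(N,M))^*$, the dual companion of the one used in Lemma \ref{lem-lambek}; crucially it requires the free resolution of $N$ to be finitely generated through degree $n+1$, not merely through degree $n$. This is exactly what coherence supplies: by Definition \ref{def-Coh-ring} every $N\in\mathscr{X}_n$ is $n+1$-presented (indeed infinitely presented), so the isomorphism is available and $\Ext^n_R(N,M)=0$ forces $\Tor^R_n(M^*,N)=0$. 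The converse half of $(2)$ is formal: if $M^*$ is $n$-$\mathscr{X}$-flat then $M^{**}$ is $n$-$\mathscr{X}$-injective by Lemma \ref{lem-lambek}, whence $M$ is $n$-$\mathscr{X}$-injective by the purity remark above. Equivalently, one may dimension-shift to the finitely presented syzygy $K_{n-1}$ exactly as in the proof of Lemma \ref{lem-pur-fla-inj}; coherence makes $K_{n-1}$ be $2$-presented, which is precisely the hypothesis turning the degree-one character duality into an isomorphism rather than a mere surjection.

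For $2\Rightarrow 4$, let $M$ be a flat right module; then $M^*$ is $n$-$\mathscr{X}$-injective (Lemma \ref{lem-lambek}), so by the forward half of $(2)$ its character $M^{**}$ is $n$-$\mathscr{X}$-flat, which is the nontrivial arrow of $(4)$; the reverse arrow is again the purity remark. It remains to prove $4\Rightarrow 1$, which I would reduce to Theorem \ref{thm-flatr-inj} by showing that a product $\prod_{j}P_j$ of $n$-$\mathscr{X}$-flat right modules is $n$-$\mathscr{X}$-flat. Put $Q=\bigoplus_j P_j$, which is $n$-$\mathscr{X}$-flat by Lemma \ref{lem-pro-sum}; by $(4)$, $Q^{**}$ is $n$-$\mathscr{X}$-flat. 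Now $Q^*=\prod_j P_j^*$, and since $\bigoplus_j P_j^*$ is a pure submodule of $\prod_j P_j^*$ (Lemma \ref{lem-pur-sum-prod}), dualizing and applying Lemma \ref{lem-pur-split} $(1\Leftrightarrow 3)$ exhibits $(\bigoplus_j P_j^*)^*=\prod_j P_j^{**}$ as a direct summand of $Q^{**}$; hence $\prod_j P_j^{**}$ is $n$-$\mathscr{X}$-flat by Lemma \ref{lem-pro-sum}. Finally $\prod_j P_j$ is a pure submodule of $\prod_j P_j^{**}$ (Lemmas \ref{lem-pur-mono} and \ref{lem-pur-sum-prod}), so it is $n$-$\mathscr{X}$-flat by Lemma \ref{lem-pur-fla-inj}. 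Thus condition $(3)$ of Theorem \ref{thm-flatr-inj} holds, and Theorem \ref{thm-flatr-inj} $(1\Leftrightarrow 3)$ gives that $R$ is left $n$-$\mathscr{X}$-coherent, closing the cycle.
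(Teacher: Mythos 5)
Your proposal is correct and takes essentially the same route as the paper: the same character-duality isomorphism $\Tor^R_n(M^*,N)\cong(\Ext^n_R(N,M))^*$ (with coherence supplying the $n+1$-presentation of $N\in\mathscr{X}_n$) drives $1\Rightarrow 2$, Lemma \ref{lem-lambek} plus the purity lemmas give the formal arrows, and your $4\Rightarrow 1$ is the paper's argument verbatim. The only differences are cosmetic: you run the cycle $1\Rightarrow 2\Rightarrow 4\Rightarrow 1$ with $2\Leftrightarrow 3$ observed to be a tautology via Lemma \ref{lem-lambek}, where the paper chains $1\Rightarrow 2\Rightarrow 3\Rightarrow 4\Rightarrow 1$, and you settle the converse half of $(2)$ by purity rather than by reading the isomorphism backwards.
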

\begin{proof} $1\Rightarrow 2$. Since $R$ is left
$n$-$\mathscr{X}$-coherent, every $n$-presented module in
$\mathscr{X}$ is infinitely presented, and so, from \cite[Theorem
9.51 and the remark following it]{Rot}, we have:
$$\Tor^R_n( M^*, N) \cong    (\Ext^n_R(N,M))^* $$
for every  $R$-module $N\in \mathscr{X}_n$. This shows that an
$R$-module $M$ is  $n$-$\mathscr{X}$-injective if and
     only if $M^*$ is $n$-$\mathscr{X}$-flat.\\
$2\Rightarrow 3$. Follows from the equivalence of $(2)$ and
Lemma \ref{lem-lambek}.\\
$3\Rightarrow 4$. Let $M$ be   $n$-$\mathscr{X}$-flat right
$R$-module. Then, by Lemma \ref{lem-lambek}, $M^*$ is
$n$-$\mathscr{X}$-injective, and by $(3)$, $M^{***}$ is
$n$-$\mathscr{X}$-injective. Therefore, by Lemma \ref{lem-lambek},
$M^{**}$  $n$-$\mathscr{X}$-flat.\\
Conversely, consider a right $R$-module $M$ such that $M^{**}$ is
$n$-$\mathscr{X}$-flat. By Lemma \ref{lem-pur-mono}, $M$ is a pure
submodule of $M^{**}$. Then,   Lemma \ref{lem-pur-fla-inj} $(1)$
shows that $M$ is $n$-$\mathscr{X}$-flat, too.\\
$4\Rightarrow 1$. Using Theorem \ref{thm-flatr-inj}, we have to
prove that every product of  $n$-$\mathscr{X}$-flat right
$R$-modules is $n$-$\mathscr{X}$-flat. Then, consider a family
$(P_{j})_{j\in J}$ of  $n$-$\mathscr{X}$-flat right $R$-modules.
From Lemma \ref{lem-pro-sum} $(1)$, the sum
$\displaystyle\bigoplus_{j\in J} P_{j} $ is
$n$-$\mathscr{X}$-flat. Then, by $(4)$,
$(\displaystyle\bigoplus_{j\in J} P_{j})^{**}\cong
(\displaystyle\prod_{j\in J} P_{j}^*)^*$ is
$n$-$\mathscr{X}$-flat. On the other hand, from Lemma
\ref{lem-pur-sum-prod} $(1)$, the sum $
\displaystyle\bigoplus_{j\in J} P_{j} ^{* }$ is a pure submodule
of the product $ \displaystyle\prod_{j\in J} P_{j} ^{* }$. Then,
by Lemma \ref{lem-pur-split} $(1 \Leftrightarrow 3)$, we deduce
that $( \displaystyle\bigoplus_{j\in J} P_{j} ^{* })^*$ is a
direct summand of $( \displaystyle\prod_{j\in J} P_{j} ^{* })^*$,
and so $ \displaystyle\prod_{j\in J} P_{j}^{**} \cong (
\displaystyle\bigoplus_{j\in J} P_{j} ^{* })^*$ is
$n$-$\mathscr{X}$-flat. Therefore, using  Lemmas
\ref{lem-pur-sum-prod} $(2)$ and \ref{lem-pur-fla-inj} $(1)$, the
direct product $ \displaystyle\prod_{j\in J} P_{j} $ is
$n$-$\mathscr{X}$-flat.
\end{proof}

We end this paper with a counterpart of \cite[Proposition
6.5.1]{Rel-hom} such that we give a characterization of
$n$-$\mathscr{X}$-coherent by $n$-$\mathscr{X}$-flat preenvelope.
Here, the $n$-$\mathscr{X}$-flat preenvelopes are Enochs'
$\mathscr{F}$-preenvelopes, where $\mathscr{F}$ is the class of
all $n$-$\mathscr{X}$-flat modules (see Introduction). The proof
of this result is analogous to the one of \cite[Proposition
6.5.1]{Rel-hom}. So we need the following two results:

\begin{lem}[\cite{Rel-hom}, Lemma 5.3.12]\label{lem-prenv1} Let $F$ and $N$ be $R$-modules.
Then, there is a cardinal number $\aleph_\alpha$ such that, for
every homomorphism $f\,:\,  N\rightarrow F$, there is a pure
submodule $P$ of $F$ such that $f(N)\subset P$ and $\Card(P)\leq
\aleph_\alpha$.
\end{lem}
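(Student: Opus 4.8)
The plan is to choose the cardinal \emph{once and for all}, independently of $f$, by letting $\aleph_\alpha$ be the least infinite cardinal satisfying $\aleph_\alpha\geq \Card(N)$ and $\aleph_\alpha\geq \Card(R)$, and then, for a given $f\,:\,N\rightarrow F$, to manufacture $P$ by a L\"owenheim--Skolem style closure of $f(N)$ inside $F$. The tool that makes this work is the reformulation of purity in terms of finite systems of linear equations (the elementary form of Lemma \ref{lem-pur-split} $(1\Leftrightarrow 2)$, cf. \cite{Stenst2}): a submodule $A$ of $F$ is pure if and only if every finite system $\sum_{j=1}^{m} r_{ij}x_j=a_i$ $(1\leq i\leq k)$ with $r_{ij}\in R$ and $a_i\in A$ that is solvable in $F$ is already solvable in $A$. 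Indeed, a homomorphism into $F/A$ from a finitely presented module with presentation $R^m\rightarrow R^k\rightarrow P\rightarrow 0$, together with a lift to $F$, is encoded precisely by such a finite system and a solution.

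First I would put $P_0$ equal to the $R$-submodule of $F$ generated by $f(N)$, so that $\Card(P_0)\leq \aleph_\alpha$. Given $P_i$, I would look at every finite system of $R$-linear equations whose constant terms lie in $P_i$ and which is solvable in $F$; for each such system I would select a single solution in $F$ and adjoin all of its (finitely many) components, letting $P_{i+1}$ be the submodule of $F$ generated by $P_i$ together with the adjoined elements. I would then set $P=\bigcup_{i<\omega}P_i$, which plainly contains $f(N)$.

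Purity of $P$ would then fall out of the finiteness of the systems: any finite system with constant terms in $P$ has all of its finitely many constants in some single $P_i$, and by construction a solution of it was thrown into $P_{i+1}\subseteq P$; hence every finite system with constants in $P$ that is solvable in $F$ is solvable in $P$, which is exactly purity. For the size bound I would check by induction that $\Card(P_i)\leq \aleph_\alpha$: a finite system with constants in $P_i$ is just a finite string of elements of $R$ and of $P_i$, so there are at most $\Card(R)\cdot\Card(P_i)\cdot\aleph_0\leq \aleph_\alpha$ of them; each contributes only finitely many new generators; and the submodule generated by at most $\aleph_\alpha$ elements over a ring of cardinality $\leq \aleph_\alpha$ again has cardinality $\leq \aleph_\alpha$. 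Passing to the countable union preserves the bound, giving $\Card(P)\leq \aleph_\alpha$.

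The main obstacle is the first step: matching the naive ``solvability of finite systems'' closure with the tensor-product (equivalently finitely presented) notion of purity actually used in the paper. Once the equivalence recorded in Lemma \ref{lem-pur-split} is invoked to translate between a map out of a finitely presented module with a prescribed lift and a finite linear system with a prescribed solution, the closure and the cardinal bookkeeping are routine. A secondary subtlety worth stating explicitly is that $\aleph_\alpha$ must be fixed before $f$ is given; this is why the bound is expressed through $\Card(N)$ and $\Card(R)$ alone (in particular it does not depend on $F$), which is precisely what the construction yields.
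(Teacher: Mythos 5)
Your proof is correct, but note that the paper itself offers no argument for Lemma \ref{lem-prenv1}: it is quoted from Enochs--Jenda (Lemma 5.3.12 of the cited book), so there is no internal proof to compare against. What you have written is essentially the standard proof of that cited result: Cohn's equational criterion for purity (a submodule $A\subset F$ is pure exactly when every finite $R$-linear system with constants in $A$ that is solvable in $F$ is already solvable in $A$), which is indeed equivalent to condition $(2)$ of Lemma \ref{lem-pur-split} by the translation you sketch, followed by an $\omega$-step closure of the submodule generated by $f(N)$ together with routine cardinal bookkeeping. The three key steps all check out: the number of systems at each stage is at most $\aleph_0\cdot\Card(R)\cdot\Card(P_i)\leq\aleph_\alpha$; the union of the countable increasing chain is a submodule of cardinality at most $\aleph_\alpha$ containing $f(N)$; and purity holds because any finite system with constants in $P$ has its finitely many constants in some $P_i$, whence a chosen solution lies in $P_{i+1}\subseteq P$. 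Your argument even buys something the paper's statement does not make explicit: your $\aleph_\alpha$ depends only on $\Card(N)$ and $\Card(R)$, not on $F$. That uniformity in $F$ is precisely what the proof of Theorem \ref{thm-prenv} actually uses (there $F$ ranges over all $n$-$\mathscr{X}$-flat right $R$-modules while only the cardinal bound on the submodule is fixed), whereas Lemma \ref{lem-prenv1} as stated fixes $F$ and lets only $f$ vary; so your formulation also closes the small gap between the lemma as quoted and the way it is invoked later in the paper.
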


\begin{lem}[\cite{Rel-hom}, Corollary 6.2.2] \label{lem-prenv2}
Let $\mathscr{X}$ be a class of $R$-modules that is closed under
direct products.   Let $M$ be an $R$-module with $\Card(M)=
\aleph_\beta$. Suppose that there is a cardinal $\aleph_\alpha$
such that, for an  $R$-module $F\in \mathscr{X}$   and a submodule
$N$ of $F$ with  $\Card(P)\leq \aleph_\beta$, there is a submodule
$P$ of $F$ containing $N$ with $P\in \mathscr{X}$ and
$\Card(P)\leq \aleph_\alpha$. Then, $M$ has an
$\mathscr{X}$-preenvelope.
\end{lem}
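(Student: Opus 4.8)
The plan is to run the standard Enochs ``solution set'' construction: I would build a single module $W\in\mathscr{X}$ together with a canonical homomorphism $\varphi\colon M\to W$ through which every homomorphism from $M$ into a member of $\mathscr{X}$ factors. The whole point of the cardinality hypothesis is to guarantee that the data defining $W$ form a genuine set rather than a proper class, so that the product defining $W$ both makes sense and lands in $\mathscr{X}$.

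First I would use the cardinality condition to reduce to ``small'' targets. Given any homomorphism $g\colon M\to G$ with $G\in\mathscr{X}$, the image $g(M)$ is a submodule of $G$ with $\Card(g(M))\leq\Card(M)=\aleph_\beta$; by hypothesis (applied to $N=g(M)\subseteq G$) there is a submodule $P$ with $g(M)\subseteq P\subseteq G$, $P\in\mathscr{X}$, and $\Card(P)\leq\aleph_\alpha$. Hence $g$ factors as the composite of its corestriction $g'\colon M\to P$ with the inclusion $\iota_P\colon P\hookrightarrow G$, where $P$ now has bounded cardinality. Consequently, to test the preenvelope property it suffices to control homomorphisms out of $M$ into modules of $\mathscr{X}$ of cardinality at most $\aleph_\alpha$.

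Next I would fix a set of representatives. Any module of cardinality $\leq\aleph_\alpha$ has underlying set (identifiable with) a subset of a fixed set of cardinality $\aleph_\alpha$, and there is only a set of $R$-module structures on such sets; thus the isomorphism classes of modules in $\mathscr{X}$ of cardinality $\leq\aleph_\alpha$ form a set, represented by a family $(F_i)_{i\in I}$. I then form the index set $T=\{(i,f)\colon i\in I,\ f\in\Hom_R(M,F_i)\}$, which is a set since $I$ and each $\Hom_R(M,F_i)$ are sets, and put $W=\prod_{(i,f)\in T}F_i$. Because $\mathscr{X}$ is closed under direct products, $W\in\mathscr{X}$. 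I define $\varphi\colon M\to W$ to be the diagonal of all the maps $f$, namely $\varphi(m)=(f(m))_{(i,f)\in T}$; by construction its composite with the projection $\pi_{(i,f)}$ equals $f$.

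Finally I would verify the universal factorization. Given $g\colon M\to G$ with $G\in\mathscr{X}$, the reduction above yields $P$ with $g(M)\subseteq P\subseteq G$, $P\in\mathscr{X}$, $\Card(P)\leq\aleph_\alpha$, together with an isomorphism $\theta\colon P\to F_i$ for some $i\in I$. Then $f:=\theta g'$ lies in $\Hom_R(M,F_i)$ and so indexes a coordinate of $W$, and setting $h:=\iota_P\,\theta^{-1}\pi_{(i,f)}\colon W\to G$ gives $h\varphi=\iota_P\theta^{-1}f=\iota_P g'=g$. Thus every homomorphism from $M$ into $\mathscr{X}$ factors through $\varphi$, which is therefore an $\mathscr{X}$-preenvelope. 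The delicate point throughout is the set-theoretic bookkeeping: the argument hinges entirely on the hypothesis that every target can be shrunk to cardinality $\leq\aleph_\alpha$, since this is exactly what converts the a priori proper class of homomorphisms out of $M$ into a set over which a single product-module can be assembled. Note that Lemma \ref{lem-prenv1} is not needed here, as the requisite shrinking is supplied directly by the hypothesis; that lemma instead serves to verify this hypothesis in concrete applications.
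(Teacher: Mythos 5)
Your proposal is correct, and since the paper offers no proof of this lemma (it simply cites \cite{Rel-hom}, Corollary 6.2.2), the right comparison is with the source, whose argument yours reproduces essentially verbatim: shrink any map $M\to G$ with $G\in\mathscr{X}$ to a factorization through some $P\in\mathscr{X}$ with $\Card(P)\leq\aleph_\alpha$, pick a set of representatives of such small modules, and take the diagonal map into the product over all pairs $(i,f)$, which lies in $\mathscr{X}$ by closure under products. You also correctly read the statement's typo ($\Card(P)\leq\aleph_\beta$ should be $\Card(N)\leq\aleph_\beta$) and correctly observe that Lemma \ref{lem-prenv1} plays no role here but only in verifying the hypothesis when the lemma is applied in Theorem \ref{thm-prenv}.
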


\begin{thm}\label{thm-prenv} Let
$\mathscr{X}$ be a class of $R$-modules such that, for a positive
integer $n\geq 1$, the subclass $\mathscr{X}_n$ of $n$-presented
$R$-modules of $\mathscr{X}$ is not empty.  Then,   $R$ is left
$n$-$\mathscr{X}$-coherent if and only if every right $R$-module
$M$ has an $n$-$\mathscr{X}$-flat preenvelope.
\end{thm}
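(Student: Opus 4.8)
The plan is to follow the classical argument of \cite[Proposition 6.5.1]{Rel-hom}, transported to the class $\mathscr{F}$ of all $n$-$\mathscr{X}$-flat right $R$-modules, using the cardinality machinery of Lemmas \ref{lem-prenv1} and \ref{lem-prenv2} together with the stability properties of $\mathscr{F}$ already established. The two implications are handled separately.

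For the forward implication, suppose $R$ is left $n$-$\mathscr{X}$-coherent. By Theorem \ref{thm-flatr-inj} ($1\Leftrightarrow 3$) the class $\mathscr{F}$ is closed under arbitrary direct products, which is the first hypothesis of Lemma \ref{lem-prenv2}. To supply its second (cardinality) hypothesis, I would fix a right $R$-module $M$ with $\Card(M)=\aleph_\beta$. Given any $F\in\mathscr{F}$ and any submodule $N\subseteq F$ with $\Card(N)\leq\aleph_\beta$, I would apply Lemma \ref{lem-prenv1} to the inclusion $N\hookrightarrow F$ to obtain a pure submodule $P$ of $F$ with $N\subseteq P$ and $\Card(P)\leq\aleph_\alpha$. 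Since $P$ is pure in the $n$-$\mathscr{X}$-flat module $F$, Lemma \ref{lem-pur-fla-inj}(1) gives $P\in\mathscr{F}$. Thus the hypotheses of Lemma \ref{lem-prenv2} are met for $\mathscr{X}=\mathscr{F}$, and $M$ has an $\mathscr{F}$-preenvelope, i.e.\ an $n$-$\mathscr{X}$-flat preenvelope.

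For the converse, assume every right $R$-module has an $n$-$\mathscr{X}$-flat preenvelope. By Theorem \ref{thm-flatr-inj} ($1\Leftrightarrow 3$) it suffices to prove that any direct product $P=\prod_{j\in J}P_j$ of $n$-$\mathscr{X}$-flat right $R$-modules is again $n$-$\mathscr{X}$-flat. Let $\varphi\colon P\rightarrow F$ be an $n$-$\mathscr{X}$-flat preenvelope, so that $F\in\mathscr{F}$. For each $j$ the projection $\pi_j\colon P\rightarrow P_j$ maps into the $n$-$\mathscr{X}$-flat module $P_j$, so by the preenvelope property it factors as $\pi_j=g_j\varphi$ for some $g_j\colon F\rightarrow P_j$. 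The universal property of the product assembles the $g_j$ into a map $g\colon F\rightarrow P$ with $\pi_j g\varphi=g_j\varphi=\pi_j$ for every $j$; since the $\pi_j$ separate points, $g\varphi=\mathrm{id}_P$. Hence $\varphi$ is a split monomorphism and $P$ is (isomorphic to) a direct summand of the $n$-$\mathscr{X}$-flat module $F$. By Lemma \ref{lem-pro-sum}(1), $P$ is $n$-$\mathscr{X}$-flat, which completes the converse.

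The step that needs the most care is the verification of the cardinality hypothesis of Lemma \ref{lem-prenv2} in the forward direction: as literally stated, Lemma \ref{lem-prenv1} produces a cardinal $\aleph_\alpha$ attached to the pair $(F,N)$, whereas Lemma \ref{lem-prenv2} requires a single cardinal $\aleph_\alpha$ working uniformly for all $F\in\mathscr{F}$ and all $N$ with $\Card(N)\leq\aleph_\beta$. This is not an actual gap: the bound produced by the L\"{o}wenheim--Skolem type construction underlying Lemma \ref{lem-prenv1} depends only on $\Card(N)$ and $\Card(R)$, so one may take $\aleph_\alpha$ to be the value associated to $\aleph_\beta$, which is independent of $F$. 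Once this uniformity is recorded, both implications reduce to the cited lemmas, and no further condition on the class $\mathscr{X}$ is needed.
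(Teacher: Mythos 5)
Your proof is correct and takes essentially the same route as the paper's: the forward direction assembles Lemmas \ref{lem-prenv1}, \ref{lem-pur-fla-inj}(1) and \ref{lem-prenv2} with closure under products from Theorem \ref{thm-flatr-inj}, and the converse splits $\prod_{j\in J}P_j$ off its preenvelope via the factored projections and concludes with Lemma \ref{lem-pro-sum}(1), exactly as in the paper. Your closing remark on the uniformity of $\aleph_\alpha$ addresses a point the paper silently glosses over (its statement of Lemma \ref{lem-prenv1} attaches the cardinal to the pair $(F,N)$, while its application requires uniformity over all $F$), and your resolution is the correct one.
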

\begin{proof} $\Rightarrow.$ Let $M$ be a right $R$-module with $\Card(M)=
\aleph_\beta$. From Lemma \ref{lem-prenv1}, there is a cardinal
$\aleph_\alpha$ such that, for an $n$-$\mathscr{X}$-flat right
$R$-module $F $ and a submodule $N$ of $F$ with  $\Card(P)\leq
\aleph_\beta$, there is a pure submodule $P$ of $F$  containing
$N$ and $\Card(P)\leq \aleph_\alpha$. From Lemma
\ref{lem-pur-fla-inj} $(1)$, $P$  is $n$-$\mathscr{X}$-flat.
Therefore, since the class of all $n$-$\mathscr{X}$-flat  right
$R$-modules is closed under direct products (by Theorem
\ref{thm-flatr-inj}), Lemma \ref{lem-prenv2} shows that $M$ has an
$n$-$\mathscr{X}$-flat preenvelope.\\
$\Leftarrow.$ To prove that $R$ is left
$n$-$\mathscr{X}$-coherent, it is sufficient, by Theorem
\ref{thm-flatr-inj}, to prove that every product of
$n$-$\mathscr{X}$-flat right $R$-modules is
$n$-$\mathscr{X}$-flat. Consider a family $(P_{j})_{j\in J}$ of
$n$-$\mathscr{X}$-flat right $R$-modules. By hypothesis,
$\displaystyle\prod_{j\in J} P_{j}$ has an $n$-$\mathscr{X}$-flat
preenvelope $f\,:\,\displaystyle\prod_{j\in J} P_{j} \rightarrow
F$. Then, for each canonical projection
$p_i\,:\,\displaystyle\prod_{j\in J} P_{j} \rightarrow P_i$ with
$i\in J$, there exists a homomorphism $p_i\,:\, F \rightarrow P_i$
such that $h_if=p_i$. Now, consider the homomorphism
$h=(h_j)_{j\in J}\,:\, F \rightarrow \displaystyle\prod_{j\in J}
P_{j}$ defined by $h(x)=(h_j(x))_{j\in J}$ for every $x\in F$.
Then, for every $a=(a_j)_{j\in J} \in \displaystyle\prod_{j\in J}
P_{j}$, we have:
$$ hf(a)=(h_j( f(a)))_{j\in J}=(p_j (a) )_{j\in J}=a.$$
This means that $ hf=1_{\Pi P_j}$. Then, $\displaystyle\prod_{j\in
J} P_{j}$ is a direct summand of $F$. Therefore, by Lemma
\ref{lem-pro-sum}, $\displaystyle\prod_{j\in J} P_{j}$ is
$n$-$\mathscr{X}$-flat.
\end{proof}



\bigskip\bigskip

\end{document}